\numberwithin{equation}{section}
\numberwithin{figure}{section}
  \theoremstyle{plain}
  \newtheorem*{thm*}{\protect\theoremname}
  \theoremstyle{remark}
  \newtheorem*{note*}{\protect\notename}
\theoremstyle{plain}
\newtheorem{thm}{\protect\theoremname}
  \theoremstyle{definition}
  \newtheorem{defn}[thm]{\protect\definitionname}
  \theoremstyle{remark}
  \newtheorem*{rem*}{\protect\remarkname}
  \theoremstyle{plain}
  \newtheorem{lem}[thm]{\protect\lemmaname}
  \theoremstyle{plain}
  \newtheorem{cor}[thm]{\protect\corollaryname}
  \theoremstyle{plain}
  \newtheorem{prop}[thm]{\protect\propositionname}
  \theoremstyle{plain}
  \newtheorem{question}[thm]{\protect\questionname}
  \providecommand{\corollaryname}{Corollary}
  \providecommand{\definitionname}{Definition}
  \providecommand{\lemmaname}{Lemma}
  \providecommand{\notename}{Note}
  \providecommand{\propositionname}{Proposition}
  \providecommand{\questionname}{Question}
  \providecommand{\remarkname}{Remark}
  \providecommand{\theoremname}{Theorem}
\providecommand{\theoremname}{Theorem}
\begin{document}

\title{A note on non-negatively curved Berwald spaces}

\author{Martin Kell}

\email{mkell@ihes.fr}

\date{\today}

\address{Institut des Hautes Études Scientifiques, 35 route de Chartres, F-91440
Bures-sur-Yvette, France}

\thanks{During the research the author was funded by an EPDI-fellowship at
the IH\'ES. He wants to thank the IH\'ES for its hospitality and
financial support. Thanks also to J. Szilasi for suggesting a more
direct approach using only the affine connection of the Berwald space.}
\begin{abstract}
In this note it is shown that Berwald spaces admitting the same norm-preserving
torsion-free affine connection have the same (weighted) Ricci curvatures.
Combing this with Szab\'o's Berwald metrization theorem one can apply
the Cheeger-Gromoll splitting theorem in order to get a full structure
theorem for Berwald spaces of non-negative Ricci curvature. Furthermore,
if none of the factor is a symmetric space one obtains an explicit
expression of Finsler norm of the resulting product. By the general
structure theorem one can apply the soul theorem to the factor in
case of non-negative flag curvature to obtain a compact totally geodesics,
totally convex submanifolds whose normal bundle is diffeomorphic to
the whole space.

In the end we given applications to the structure of Berwald-Einstein
manifolds and non-negatively curved Berwald spaces of large volume
growth. 
\end{abstract}
\maketitle
In geometry and analysis, curvature is an important tool that rules
regularity of solution of PDEs and helps to classify spaces. The two
main curvature indicators are sectional and Ricci curvature. Assuming
global bounds on those quantities one can show that a manifold must
have certain topological type. Cheeger and Gromoll managed \cite{Cheeger1971}
to show that a Riemannian manifold with non-negative Ricci curvature
contains an isometrically embedded if and only if it isometrically
splits as a Cartesian product of another manifold of non-negative
Ricci curvature and a line. Later Perelman showed \cite{Perelman1994}
that Riemannian manifold of non-negative sectional curvature diffeomorphic
to the normal bundle of an embedded compact totally geodesic submanifold.
This submanifold is called soul and every other such submanifold must
be isometric.

On Finsler manifold, i.e. those whose tangent spaces are not Euclidean
spaces, several notions of curvature exist. The most prominent being
flag and Ricci curvature. There are result on constantly curved space
and strictly positively curved spaces (see e.g. \cite{Shen1996,Shen2002}).
Only recently inspired by the theory of weighted Ricci curvature bounds
Ohta \cite{Ohta2013a} managed to show a diffeomorphic splitting theorem
for general Finsler manifolds of non-negative Ricci curvature. 

A subclass of Finsler manifolds are Berwald spaces. Whereas general
Finsler manifolds can have non-isometric tangent spaces, Berwald spaces
are modeled on the ``same'' tangent space. An important property
of those spaces is that the parallel transport is linear and admits
a unique affine connection. In \cite{Szabo1981} (see also \cite{Szabo2006})
Szab\'o managed to show that there is indeed a Riemannian metric
whose Levi-Civita connection agrees with the affine connection of
the Berwald space. In particular, geodesics in a Berwald space are
affinely equivalent to geodesics of a Riemannian manifold. As the
theory of Levi-Civita connections and their structure is well-known,
in particular their product structure (de Rham Decomposition Theorem
\cite{DeRham1952}) and their rigidity (Berger-Simmons Theorem, see
\cite{Olmos2005} for a new geometric proof), one can exactly tell
which Levi-Civita connections can be metrized via a non-Riemannian
Berwald metric. 

Because of their nicely behaved connections, there are some results
only known to hold for Berwald spaces, but not (yet) for general Finsler
spaces, e.g. Busemann convex Berwald spaces are exactly the ones having
non-positively flag curvature \cite{Kristaly2006}. Furthermore, for
Berwald spaces of non-negative Ricci curvature, Ohta \cite{Ohta2013a}
proved an extended diffeomorphic splitting theorem, i.e. he showed
that a Berwald space has a maximal flat factor and the fibers over
this factor are totally geodesic.

In this article we first show that Ricci curvature only depends on
the induced connection and thus is an affine invariant of Berwald
spaces. As Szab\'o showed that every Berwald space is affinely equivalent
to a Riemannian manifold, one can apply the classical Cheeger-Gromoll
splitting. This together with the general structure theorem of Berwald
space yields. Note that this calculation only needs the Berwald Metrization
Theorem and results from Riemannian geometry. In particular, the de
Rham Decomposition Theorem.
\begin{thm*}
A geodesically complete simply connected Berwald space $(M,F)$ of
non-negative Ricci curvature has the following form 
\[
M=M_{0}\times\cdots\times M_{n}
\]
where $M_{0}=\mathbb{R}^{k}$ is the maximal flat factor and each
$M_{i}$ is simply connected and either an affinely rigid Riemannian
manifold or a higher rank symmetric Berwald space of compact type.
In particular, each factor has non-negative Ricci curvature.

Furthermore, if none of the $M_{i}$ is a higher rank symmetric Berwald
space then the Finsler norm has the following form 
\[
F(v_{0},\cdots,v_{n})=G(v,F_{1}(v_{1}),\cdots,F_{n}(v_{n}))
\]
where $v_{i}\in TM_{i}$, $F_{i}$ is a norm induced by a unique up
to scale Riemannian metric $g_{i}$ on $M_{i}$ and $G$ is a Minkowski
norm on $\mathbb{R}^{k+n}$ which is symmetric in the last $n$-coordinates.
\end{thm*}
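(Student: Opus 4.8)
The plan is to pass to the Riemannian setting via Szab\'o's metrization theorem, apply the de Rham decomposition together with the Cheeger--Gromoll splitting, classify the resulting irreducible factors with the Berger--Simons holonomy theorem, and finally translate the product structure back to the Finsler norm using that the norm is parallel for the affine connection.

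\emph{Step 1 (metrization and the Ricci bound).} By Szab\'o's Berwald Metrization Theorem there is a Riemannian metric $g$ on $M$ whose Levi-Civita connection coincides with the affine connection $\nabla$ of $(M,F)$. Since the $F$-geodesics and the $\nabla$-geodesics agree up to affine reparametrisation, geodesic completeness of $(M,F)$ yields geodesic completeness of $(M,g)$, and $M$ is simply connected by hypothesis. By the affine invariance of the (weighted) Ricci curvature established above, $\mathrm{Ric}_{F}\ge 0$ forces $\mathrm{Ric}_{g}\ge 0$.

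\emph{Step 2 (splitting).} The de Rham Decomposition Theorem applied to the complete simply connected $(M,g)$ gives an isometric product $M=\mathbb{R}^{k}\times N_{1}\times\cdots\times N_{m}$ with $\mathbb{R}^{k}$ the Euclidean de Rham factor and each $N_{j}$ simply connected, irreducible and not flat; the Cheeger--Gromoll splitting theorem, available thanks to $\mathrm{Ric}_{g}\ge 0$, shows that the Euclidean factor accounts for all lines of $M$, so $\mathbb{R}^{k}$ is maximal. The Ricci tensor of a Riemannian product being block diagonal, each $N_{j}$ has $\mathrm{Ric}\ge 0$. Each de Rham factor is totally geodesic, so $F$ restricts to a Finsler metric $F_{j}$ on $N_{j}$; its affine connection is the restriction of $\nabla$, i.e.\ the Levi-Civita connection of $g|_{N_{j}}$, and the induced parallel transport is linear and norm-preserving, so $(N_{j},F_{j})$ is again a Berwald space, which is irreducible.

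\emph{Step 3 (classifying the factors).} Fix a base point and a factor $N_{j}$ with restricted holonomy group $H_{j}$ acting irreducibly on the tangent space. By the Berger--Simons holonomy theorem either $H_{j}$ occurs on Berger's list or $N_{j}$ is a Riemannian symmetric space. In the first case $H_{j}$ acts transitively on the $g_{j}$-unit sphere, as does the isotropy group of a rank-one symmetric space; since parallel transport of a Berwald space preserves the Finsler norm, $F_{j}$ is then a holonomy-invariant Minkowski norm, hence constant on that sphere, hence a constant multiple of the norm of $g_{j}$ -- so $(N_{j},F_{j})$ is Riemannian, and (transitivity again) the metric is unique up to a scaling factor, i.e.\ an affinely rigid Riemannian manifold. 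The only remaining possibility is that $N_{j}$ is an irreducible symmetric space of rank $\ge 2$, and then $\mathrm{Ric}\ge 0$ forces compact type. Writing $M_{0}=\mathbb{R}^{k}$ and relabelling $N_{1},\dots,N_{m}$ as $M_{1},\dots,M_{n}$ proves the first assertion.

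\emph{Step 4 (the norm) and the main difficulty.} Assume now that no $M_{i}$ is a higher rank symmetric Berwald space. At $p=(p_{0},\dots,p_{n})$ the restricted holonomy of $\nabla$ is $\{1\}\times H_{1}\times\cdots\times H_{n}$ acting block-diagonally on $T_{p}M=\bigoplus_{i}T_{p_{i}}M_{i}$, and $F_{p}$ is a Minkowski norm invariant under it. Fixing $g_{i}$-unit vectors $u_{i}$ and setting $G(w,t_{1},\dots,t_{n}):=F_{p}(w,t_{1}u_{1},\dots,t_{n}u_{n})$ for $w\in T_{p_{0}}M_{0}\cong\mathbb{R}^{k}$, transitivity of each $H_{i}$ on its $g_{i}$-sphere and invariance of $F_{p}$ give $F_{p}(v_{0},\dots,v_{n})=G\bigl(v_{0},|v_{1}|_{g_{1}},\dots,|v_{n}|_{g_{n}}\bigr)$ and show $G$ is even, hence symmetric, in $t_{1},\dots,t_{n}$; as the restriction of a Minkowski norm to a $(k+n)$-dimensional subspace, $G$ is itself a Minkowski norm. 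Since $F$, the flat structure on $M_{0}$ and each $g_{i}$ are parallel for $\nabla$ and parallel transport respects the product splitting, transporting the $u_{i}$ shows $G$ is independent of $p$, hence globally defined; rescaling $g_{i}$ so that $F(0,\dots,v_{i},\dots,0)=|v_{i}|_{g_{i}}=:F_{i}(v_{i})$ yields $F(v_{0},\dots,v_{n})=G\bigl(v_{0},F_{1}(v_{1}),\dots,F_{n}(v_{n})\bigr)$. The crux of the whole argument is Step 3: identifying the non-Riemannian irreducible Berwald factors with the higher rank symmetric spaces, which relies on the Berger--Simons theorem together with the case-by-case fact that every group on Berger's list and every rank-one isotropy group acts transitively on the unit sphere, so that failure of this transitivity -- the only obstruction to a holonomy-invariant norm being Euclidean -- occurs precisely for symmetric spaces of rank $\ge 2$; the globalisation of $G$ in Step 4 is the secondary point needing care.
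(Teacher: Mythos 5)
Your proof is correct and follows essentially the same route as the paper: Szab\'o metrization plus affine invariance of the Ricci curvature, then de Rham and Cheeger--Gromoll, then the Berger--Simons dichotomy to classify the irreducible factors, and finally holonomy-invariance of the Minkowski norm on a single tangent space to produce $G$. The only (harmless) deviation is that you dispose of rank-one symmetric factors by invoking transitivity of their holonomy on the unit sphere, whereas the paper instead uses its Lemma \ref{lem:holonRk1} (holonomy orbits meet every line) together with a connectedness argument to show that invariant functions are even.
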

Let $g_{i}$ denote either the unique Riemannian metric on $M_{i}$
or a Riemannian metric on the symmetric space $M_{i}$ such that $(M_{i},g_{i})$
is Einstein, e.g. $g_{i}=\operatorname{Ric}_{i}$. Note that in both
cases $(M_{i},F_{i})$ and $(M_{i},g_{i})$ are affinely equivalent.
Then the Cartesian product $(M,g)$ of $(M_{i},g_{i})$ with Riemannian
metric $g=\sum g_{i}$, where $g_{0}$ is any flat metric on $M_{0}$,
has non-negative sectional curvature if $M$ has non-negative flag
curvature. Furthermore, $(M,F)$ and $(M,g)$ are affinely equivalent,
i.e. their geodesics agree. If one applies the soul theorem \cite{Perelman1994}
to $(M,g)$ then the soul $S$ is also a compact, totally geodesic
submanifold for the Berwald space $(M,F)$. In particular, $M$ is
diffeomorphic to the normal bundle of $S$.
\begin{note*}
An earlier version of this paper relied heavily on \cite{Ohta2013a}
and its use of the Chern connection. The author wants to thank Szilasi
for referring him to the paper \cite{Deng2015} and suggesting to
avoid complicated calculation involving the Chern connection.
\end{note*}

\section{Affine connections and the de Rham decomposition}

Let $M$ be a connected, $n$-dimensional smooth manifold and denote
by $C^{\infty}(M,TM)$ be the space of vector fields. 
\begin{defn}
[Affine connection] An \emph{affine connection} is a bilinear map
\begin{eqnarray*}
\nabla:C^{\infty}(M,TM)\times C^{\infty}(M,TM) & \to & C^{\infty}(M,TM)\\
(X,Y) & \mapsto & \nabla_{X}Y
\end{eqnarray*}
such that for all smooth function $f$ and all vector field $X,Y$
on $M$
\begin{itemize}
\item \emph{($C^{\infty}(M,\mathbb{R})$-linearity)} $\nabla_{fX}Y=f\nabla_{X}Y$
\item \emph{(Leibniz rule)} $\nabla_{X}(fX)=df(X)Y+f\nabla_{X}Y$.
\end{itemize}
We say that $\nabla$ is \emph{torsion-free} if for all vector field
$X$ and $Y$ 
\[
\nabla_{X}Y-\nabla_{Y}X=[X,Y].
\]
\end{defn}
\begin{rem*}
Note that every connection has an associated covariant derivative
which agrees with the connection when applied to vector fields. In
the Finsler setting the gradient of functions, usually denoted by
$\nabla$ as well, is defined via Legendre transform and does not
agree with the connection or the covariant derivative applied to $f$.
Therefore, it is more convenient to use $D$ as a symbol for the covariant
derivative. 
\end{rem*}
Suppose the tangent bundle $TM$ splits into a direct sum $V\oplus W$
such $\nabla_{X}(Y+Z)=\nabla_{X}Y$ whenever $X,Y\in V$ and $Z\in W$
or $X,Y\in W$ and $Z\in V$. Then $\nabla$ can be written as a sum
of two affine connections only acting on $V$ and $W$ respectively.
In this case one says that $\nabla$ is reducible. If there does not
exist such a splitting then $\nabla$ is said to be irreducible. 

In local coordinates one can define the coefficients $\Gamma_{ij}^{k}$
of the connection as follows
\[
\nabla_{\partial_{i}}\partial_{j}=\sum_{k=1}^{n}\Gamma_{ij}^{k}\partial_{k}
\]
where $\{\partial_{i}\}$ spans is a local trivialization of the tangent
bundle. Then the covariant derivative $D$ acting on vector fields
is defined as 
\[
D_{V}X=\sum_{i,j=1}^{n}\left\{ v^{i}\partial_{j}X^{i}+\sum_{k=1}^{n}\Gamma_{jk}^{i}v^{j}X^{k}\right\} \partial_{i}
\]
where $X=\sum_{i=1}^{n}X^{i}\partial_{i}$ and $V=\sum_{i=1}^{n}v^{i}\partial_{i}$.

If $(M,g)$ is a Riemannian manifold then there is a unique torsion-free
affine connection, called \emph{Levi-Civita connection}, which is
also metric compatible, i.e. 
\[
\nabla g=0.
\]
This is equivalent to require the connection coefficients to have
the following form
\[
\Gamma_{ij}^{k}=\frac{1}{2}\sum_{m=1}^{n}g^{km}\left(\partial_{j}g_{mi}+\partial_{i}g_{mj}-\partial_{m}g_{ij}\right)
\]
where $g=(g_{ij})$ and $(g^{ij})$ is the inverse of $(g_{ij})$.
In the Riemannian case the coefficients are usually called Christoffel
symbols.

For a simply connected Riemannian manifold which is geodesically complete
(see section on Finsler structures below) one can split the manifold
into a product of irreducible components, i.e. those having irreducible
Levi-Civita connection. The following theorem is well-known.
\begin{thm}
[de Rham Decomposition] Let $M$ be a simply connected, geodesically
complete Riemannian manifold with Levi-Civita connection $\nabla$.
Then $M$ is isometric to a product, i.e. 
\[
M=M_{0}\mbox{\ensuremath{\times}}\cdots\times M_{n}
\]
metric $g=\sum g_{i}$ where $(M_{0},g_{0})$ is isometric to an Euclidean
space and each $(M_{i},g_{i})$ admits a unique irreducible Levi-Civita
connection $\nabla_{i}$. Furthermore, the Levi-Civita connection
$\nabla$ can be written as a sum of the $\nabla_{i}$. 
\end{thm}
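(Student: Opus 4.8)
The plan is to run the classical holonomy argument of de Rham \cite{DeRham1952}. First I would fix a base point $p\in M$ and study the holonomy group $\mathrm{Hol}_{p}\subseteq O(T_{p}M)$ of $\nabla$. Since $\nabla$ is metric, parallel transport is an isometry, so $\mathrm{Hol}_{p}$ acts orthogonally on $T_{p}M$; as $M$ is simply connected, $\mathrm{Hol}_{p}$ coincides with its identity component, so this technical point does not intervene. Because the orthogonal complement of an invariant subspace is again invariant, $T_{p}M$ splits orthogonally as $V_{0}\oplus V_{1}\oplus\cdots\oplus V_{n}$, where $V_{0}$ is the subspace fixed pointwise by $\mathrm{Hol}_{p}$ and each $V_{i}$ with $i\ge1$ is $\mathrm{Hol}_{p}$-irreducible.

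Next I would propagate this splitting over $M$ by parallel transport: for $q\in M$ and a path $\gamma$ from $p$ to $q$, set $\mathcal{D}_{i}(q):=P_{\gamma}(V_{i})$, where $P_{\gamma}$ denotes $\nabla$-parallel transport along $\gamma$. Holonomy-invariance of $V_{i}$ makes $\mathcal{D}_{i}(q)$ independent of $\gamma$, so one obtains smooth, mutually orthogonal distributions $\mathcal{D}_{0},\dots,\mathcal{D}_{n}$ with $TM=\bigoplus_{i}\mathcal{D}_{i}$, each parallel for $\nabla$. A parallel distribution is integrable: if $X,Y$ are sections of $\mathcal{D}_{i}$ then so are $\nabla_{X}Y$ and $\nabla_{Y}X$, hence $[X,Y]=\nabla_{X}Y-\nabla_{Y}X$ is one as well by torsion-freeness; and the orthogonal complement $\bigoplus_{j\ne i}\mathcal{D}_{j}$ is parallel for the same reason. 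The local de Rham theorem (a manifold carrying complementary parallel distributions is locally a Riemannian product) then gives that near every point $M$ is isometric to the product of the integral leaves of the $\mathcal{D}_{i}$, with $\nabla$ restricting to the sum of the induced Levi-Civita connections.

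Then I would globalize. Let $M_{i}$ be the maximal integral leaf of $\mathcal{D}_{i}$ through $p$, with the induced metric $g_{i}$. Since $\mathcal{D}_{i}$ is parallel, a $\nabla$-geodesic of $M$ tangent to $\mathcal{D}_{i}$ at one point stays tangent to it, hence remains in the leaf; together with completeness of $M$ this shows each $(M_{i},g_{i})$ is geodesically complete. The map $M_{0}\times\cdots\times M_{n}\to M$ obtained by following, from $p$, the ``grid'' of leafwise geodesics is then well defined (this is exactly where geodesic completeness enters), a local isometry by the local statement above, hence a Riemannian covering; simple connectedness of $M$ upgrades it to a global isometry, carrying $g$ to $\sum g_{i}$ and $\nabla$ to $\sum\nabla_{i}$. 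Finally, $V_{0}$ being $\mathrm{Hol}_{p}$-fixed forces the curvature along $\mathcal{D}_{0}$ to vanish identically (Ambrose--Singer), so $(M_{0},g_{0})$ is complete, simply connected and flat, hence isometric to Euclidean space; each $\nabla_{i}$ with $i\ge1$ is irreducible precisely because $\mathrm{Hol}_{p}$ acts irreducibly on $V_{i}$, and its uniqueness is just uniqueness of the Levi-Civita connection of $g_{i}$.

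I expect the globalization in the third paragraph to be the main obstacle: one must verify that the leafwise-geodesic grid issuing from $p$ actually sweeps out all of $M$ and patches into a genuine local isometry out of the product — this is the delicate use of completeness, effectively a monodromy argument built on the local splitting — and that simple connectedness promotes the resulting covering to a diffeomorphism. Since the statement is classical, in the writeup I would only sketch these steps and refer to \cite{DeRham1952} for the full details.
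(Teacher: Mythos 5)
The paper gives no proof of this theorem at all: it is stated as ``well-known'' and attributed to \cite{DeRham1952}, so there is nothing internal to compare against. Your sketch is the standard holonomy argument (orthogonal decomposition of $T_{p}M$ into the fixed subspace and irreducible holonomy-invariant summands, parallel distributions, local splitting, globalization via completeness and simple connectedness), it is correct, and deferring the delicate globalization step to de Rham's original paper is exactly what one would do for a classical result of this kind.
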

Every affine connection induces a unique linear \emph{parallel transport}
along curves, i.e. for a smooth curve there is a map
\[
P_{\gamma}:T_{\gamma_{0}}M\to T_{\gamma_{1}}M
\]
such that $P_{\gamma}v$ is the unique vector $X_{\gamma_{1}}$ of
a vector field $X$ such that $\nabla_{\gamma_{t}}X=0$ for all $t\in[0,1]$.
It is easy to see that $P_{\gamma}$ is an invertible linear map between
vector spaces. We define the holonomy group as those invertible linear
maps $A:T_{x}M\to T_{x}M$ for which there is a curve $\gamma$ with
$\gamma(0)=\gamma(1)=x$ and $A=P_{\gamma}$. Note that if $\nabla$
is a Levi-Civita connection then the parallel transport preserves
the norm of a vector. In particular, every $A\in\mathcal{H}_{x}$
is an orthogonal transformation on $(T_{x}M,g_{x})$.
\begin{thm}
[Berger-Simmons] Let $(M,g)$ be a simply connected, geodesically
complete, irreducible Riemannian manifold. If the holonomy does not
act transitively on the unit sphere then $(M,g)$ is a Riemannian
symmetric space. 
\end{thm}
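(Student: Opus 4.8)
The plan is to reduce this global statement to a local one about the curvature tensor, and then to settle the local statement by analysing the holonomy representation. For the reduction, recall Cartan's theorem that a simply connected, geodesically complete Riemannian manifold whose curvature tensor is parallel is a globally symmetric space: its local geodesic symmetries extend to global isometries. Hence it suffices to prove that, under the stated hypotheses, the curvature tensor $R$ of $(M,g)$ satisfies $\nabla R\equiv 0$, i.e.\ that $(M,g)$ is locally symmetric.

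Fix $p\in M$ and let $\mathcal H=\mathcal H_{p}\subseteq SO(T_{p}M,g_{p})$ be the holonomy group. Since $M$ is simply connected, $\mathcal H$ is connected, and since $M$ is geodesically complete it is a closed --- hence compact --- Lie subgroup, with Lie algebra $\mathfrak h\subseteq\mathfrak{so}(T_{p}M)$. The Ambrose--Singer holonomy theorem identifies $\mathfrak h$ with the linear span of the operators $P_{\gamma}^{-1}\circ R_{\gamma(1)}(X,Y)\circ P_{\gamma}$, where $\gamma$ runs over piecewise-smooth paths starting at $p$ and $X,Y\in T_{\gamma(1)}M$; in particular $R_{p}(X,Y)\in\mathfrak h$ for all $X,Y\in T_pM$, so $(T_{p}M,R_{p},\mathcal H)$ is a \emph{holonomy system} in the sense of Simons. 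By hypothesis this system is irreducible and $\mathcal H$ does not act transitively on the unit sphere of $T_{p}M$, and we may assume $R_p\ne 0$ (otherwise $(M,g)$ is flat and already symmetric). By a standard argument --- using Ambrose--Singer once more, the second Bianchi identity, and the fact that $\mathfrak h$-invariance of $R$ is inherited by $\nabla R$ --- it then suffices to show that every such holonomy system is \emph{symmetric}, i.e.\ that $g\cdot R_{p}=R_{p}$ for all $g\in\mathcal H$.

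It therefore remains to prove the purely algebraic assertion that an irreducible holonomy system $(V,R,G)$ with $R\ne 0$ and $G$ not transitive on the unit sphere of $V$ is symmetric. This is the heart of the matter and the step I expect to be the real obstacle; the reductions above are routine. Here I would follow the geometric argument of Olmos \cite{Olmos2005}: choose a unit vector $v\in V$ whose orbit $\mathcal O=G\cdot v$ has maximal dimension, so that (by irreducibility and non-transitivity of $G$) $\mathcal O$ is a compact, full, irreducible homogeneous submanifold of the unit sphere of $V$, of dimension strictly between $0$ and $\dim V-1$. Exploiting that $R$ is a $G$-equivariant algebraic curvature tensor with values in $\mathfrak g$, one computes the shape operators and normal connection of $\mathcal O$ and shows, with the help of Olmos's normal holonomy theorem, that $\mathcal O$ has parallel second fundamental form. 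By Ferus's classification of such (extrinsically symmetric) submanifolds, $\mathcal O$ is an orbit of the isotropy representation of a Riemannian symmetric space, so $G$ acts on $V$ as an $s$-representation; one then checks that the $G$-invariant algebraic curvature tensors compatible with an $s$-representation are precisely the scalar multiples of the curvature tensor of the corresponding symmetric space --- which is $G$-invariant --- so the holonomy system is indeed symmetric. (Alternatively, one may quote Simons's original algebraic holonomy theorem in place of this geometric chain.) Combined with the two reductions above, this shows that $(M,g)$ is a Riemannian symmetric space.
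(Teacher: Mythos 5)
The paper does not prove this statement: it is quoted as the classical Berger--Simons holonomy theorem, with \cite{Olmos2005} cited for a modern geometric proof, so there is no in-paper argument to compare against. Your outline is the standard and correct route to that theorem: reduce global symmetry to $\nabla R\equiv 0$ via Cartan's theorem, package $(T_pM,R_p,\mathcal H_p)$ as a Simons holonomy system via Ambrose--Singer, and then invoke the algebraic core (Simons) or its geometric replacement (Olmos, via maximal-dimensional orbits, the normal holonomy theorem, and Ferus's classification of extrinsically symmetric submanifolds). That core is quoted rather than proved, which is reasonable for a result of this depth, but it does mean your text is a proof scheme rather than a self-contained proof; the same goes for the ``standard argument'' deducing $\nabla R=0$ from symmetry of the holonomy systems, which really needs the additional fact that for an irreducible \emph{non-transitive} symmetric holonomy system the curvature tensor is determined up to scale by the group, so that $P_\gamma^{-1}R_{\gamma(t)}P_\gamma=c(t)R_p$, and the second Bianchi identity then forces $c$ to be constant.

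Two small inaccuracies worth fixing. First, closedness (hence compactness) of the holonomy group is not a consequence of geodesic completeness; for an irreducible Riemannian holonomy representation it follows from the Borel--Lichnerowicz theorem, and connectedness from simple connectivity of $M$. Second, $R_p=0$ at the single chosen point $p$ does not imply that $(M,g)$ is flat; the correct reduction is to observe that if $R\equiv 0$ everywhere then Ambrose--Singer gives trivial holonomy and the (irreducible, simply connected, complete) manifold is $\mathbb R$ or flat and hence symmetric, while otherwise one chooses $p$ with $R_p\ne 0$ and runs the argument there, propagating $\nabla R=0$ afterwards. With these repairs, and granting Simons's (or Olmos's) theorem, the argument is sound.
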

It is not important to know the exact definition of symmetric spaces.
We only need the following facts: every symmetric space has a unique
rank which is a natural number $\ge1$. And every higher rank symmetric
space is either non-compact or compact and embeds a totally geodesic
flat submanifold of dimension equal to its rank. In case the symmetric
space is non-compact and irreducible this flat submanifold is isometric
to $\mathbb{R}^{n}$, in particular, there exists an isometrically
embedded line.
\begin{lem}
[Holonomy orbits in rank 1] \label{lem:holonRk1}Assume $(M,g)$
is a symmetric space of rank $1$. Denote by  $\mathcal{H}_{x}$ the
holonomy group of the Levi-Civita connection acting on $T_{x}M$ then
for $\mathcal{H}_{x}v\cap V\ne\varnothing$ for every $v\in T_{x}M$
and and one dimensional subspace $V$ of $T_{x}M$. In words, the
orbit of the holonomy intersects every one dimensional subspace.\end{lem}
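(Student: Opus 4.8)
The plan is to reduce the lemma to the assertion that $\mathcal{H}_x$ acts transitively on each metric sphere of $(T_xM,g_x)$, and then to extract that transitivity from the rank-$1$ hypothesis. The reduction is immediate: since Levi-Civita parallel transport is a linear isometry, $|Av|=|v|$ for every $A\in\mathcal{H}_x$, so if $V=\mathbb{R}w$ with $|w|=1$ and some $A\in\mathcal{H}_x$ satisfies $A(v/|v|)=w$, then $Av=|v|w\in V$ and hence $\mathcal{H}_xv\cap V\ne\varnothing$. (If $\dim M\le1$ the statement is vacuous or immediate, so from now on I would assume $\dim M\ge2$.)

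For the transitivity I would use two standard inputs. The first is the identification of the holonomy of a simply connected Riemannian symmetric space with its linear isotropy group: writing $M=G/K$ with $G$ the transvection group and $\mathfrak{g}=\mathfrak{k}\oplus\mathfrak{p}$ the Cartan decomposition, one has $T_xM\cong\mathfrak{p}$, and since $\nabla R=0$ the Ambrose-Singer theorem gives that the holonomy algebra is $\mathfrak{hol}_x=\operatorname{span}\{R_x(X,Y):X,Y\in\mathfrak{p}\}=\operatorname{ad}_{\mathfrak p}[\mathfrak p,\mathfrak p]=\operatorname{ad}_{\mathfrak p}\mathfrak{k}$; as $M$ is simply connected, $K$ is connected and $\mathcal{H}_x$ is precisely the image of $K$ in $\mathrm{O}(\mathfrak p)$. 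The second input is the rank hypothesis, which I would feed in through a maximal abelian subspace $\mathfrak{a}\subset\mathfrak{p}$: its dimension equals the rank and it is the tangent space at $x$ of a maximal totally geodesic flat, which by hypothesis is a single geodesic, so $\dim\mathfrak{a}=1$. Then I would invoke the two classical structural facts that $K$ acts transitively on the maximal abelian subspaces of $\mathfrak{p}$ (equivalently $\mathfrak{p}=\bigcup_{k\in K}k\mathfrak{a}$) and that the Weyl group $N_K(\mathfrak a)/Z_K(\mathfrak a)$, acting on the line $\mathfrak a$, contains $-\operatorname{id}$. Given unit vectors $u,u'\in\mathfrak p$, choose $k_1,k_2\in K$ with $k_1u\in\mathfrak a$ and $k_2u'\in\mathfrak a$; since $\mathfrak a$ is one-dimensional both lie in $\{\pm w_0\}$ for a fixed unit $w_0$, and composing $k_1$ with a Weyl reflection if the signs disagree produces $k\in K$ with $ku=u'$. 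This is transitivity on the unit sphere, which closes the argument.

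The main obstacle is simply importing the two facts above, but both are completely standard and involve only the Levi-Civita connection of the Riemannian symmetric space, so nothing Finsler-theoretic is needed here. If one prefers to avoid symmetric-space structure theory, one can instead quote the classification: the rank-$1$ symmetric spaces are exactly the compact and noncompact two-point homogeneous spaces --- the real, complex, quaternionic and octonionic projective and hyperbolic spaces --- whose holonomy groups $\mathrm{SO}(n)$, $\mathrm{U}(n)$, $\mathrm{Sp}(n)\cdot\mathrm{Sp}(1)$ and $\mathrm{Spin}(9)$ each act transitively on the unit sphere, giving the required $A$ directly.
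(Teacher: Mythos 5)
Your argument is correct, and its backbone is the same structural fact the paper leans on: in rank $1$ every line in $\mathfrak p\cong T_xM$ is a maximal abelian (Cartan) subspace, and every holonomy orbit meets such a subspace --- the paper simply cites this (via Szab\'o, Lemma 3.1) and stops, since $\mathcal{H}_xv\cap V\ne\varnothing$ is then immediate without any further work. Where you genuinely diverge is in reducing the lemma to the strictly stronger claim that $\mathcal{H}_x$ acts transitively on unit spheres, which you then establish by combining $K$-conjugacy of maximal abelian subspaces with the fact that the Weyl group of a rank-$1$ space contains $-\operatorname{id}$ (or, alternatively, by quoting the classification of two-point homogeneous spaces). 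That extra step is not needed for the statement as written --- meeting the line $V$ only requires landing on $\pm w_0$, not on a prescribed sign --- but it buys something: it shows the rank-$1$ alternative in the Berger--Simons dichotomy is actually subsumed by the transitive-holonomy case, which would let one shortcut the separate rank-$1$ connectedness argument in the proof of the affine rigidity classification (Theorem \ref{thm:affine-rigid}). Two small caveats: the identification $\mathcal{H}_x=K$ with $K$ connected uses simple connectivity of $M$ (true in all of the paper's applications, though not stated in the lemma), and your degenerate-case remark correctly disposes of the flat one-dimensional situation where the Weyl-group step would fail.
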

\begin{proof}
This is a well-known fact from the theory of symmetric spaces. Just
note that every one dimensional subspace of $T_{x}M$ is a Cartan
subalgebra and every orbit of the holonomy group intersects this algebra,
see \cite[Lemma 3.1]{Szabo2006}. 
\end{proof}

\section{Finsler structures and the Chern connection}

Let $M$ be a connected, $n$-dimensional $C^{\infty}$-manifold. 
\begin{defn}
[Finsler structure] A $C^{\infty}$-Finsler structure on $M$ is
a function $F:TM\to[0,\infty)$ such that the following holds
\begin{enumerate}
\item (Regularity) $F$ is $C^{\infty}$ on $TM\backslash\{\mathbf{0}\}$
where $\mathbf{0}$ stands for the zero section,
\item (Positive homogeneity) for any $v\in TM$ and any $\lambda>0$, it
holds $F(\lambda v)=\lambda F(v)$,
\item (Strong convexity) In local coordinates $(x^{i})_{i=1}^{n}$ on $U\subset M$
the matrix
\[
\left(g_{ij}(v)\right):=\left(\frac{1}{2}\frac{\partial^{2}(F^{2})}{\partial v^{i}\partial v^{j}}(v)\right)
\]
is positive-definite at every $v\in\pi^{-1}(U)\backslash0$ where
$\pi:TM\to M$ is the natural projection of the tangent bundle.
\end{enumerate}
\end{defn}
Strictly speaking, this is nothing more than defining a Minkowski
norm $F|_{T_{x}M}$ on each $T_{x}M$ with some regularity requirements
depending on $x$. We don't require $F$ to be absolutely homogeneous,
i.e. $F(v)\ne F(-v)$ is possible. In such a case the ``induced''
distance is not symmetric. 

If each $F_{x}$ is induced by an inner product $g_{x}$ then the
Finsler structure is actually a Riemannian structure and $(M,g)$
a Riemannian manifold. Note that in that case $(g_{ij}(v_{x}))=g_{x}$
and the distance is symmetric.

A geodesic from $x$ to $y$ is a curve $\gamma:[0,1]\to M$ which
minimizes the following functional 
\[
\gamma\mapsto\int_{0}^{1}F(\dot{\gamma}_{t})dt.
\]
Note that in general that the reversed curve $\bar{\gamma}(t)=\gamma(1-t)$
is not a geodesic from $x$ to $y$. Then there is a (possibly asymmetric)
metric $d$ defined as 
\[
d(x,y)=\inf_{\gamma}\int_{0}^{1}F(\dot{\gamma}_{t})dt
\]
where the infimum is taken over all curves $\gamma:[0,1]\to M$ connecting
$x$ and $y$.

\section{Berwald spaces and affine rigidity}

In contrast to the Riemannian setting there is no unique (affine)
connection on a general Finsler manifolds. However, there is a subclass
containing the Riemannian manifolds where this is the case.
\begin{defn}
[Berwald space] A Finsler structure $F$ on $M$ is called Berwald
if it admits a (unique) torsion-free affine connection whose induced
parallel transport preserves the Finsler norm. \end{defn}
\begin{rem*}
In this note we avoid the use of the Chern connection. Usually one
requires the Chern connection to be an affine connection, i.e. independent
of the reference vector. The opposite statement is shown in \cite{Szilasi2011},
i.e. a torsion-free, norm preserving affine connection is the Chern
connection and the space is Berwald.
\end{rem*}
Note that for general functions $G:TM\to\mathbb{R}$ one can interpret
$\nabla G=0$ as being invariant under parallel transport induced
by $\nabla$. Hence for Berwald spaces might say that $\nabla F=0$
is metric compatibility similar to the Riemannian case.

It is well-known that every geodesic satisfies the following 
\[
D_{\dot{\gamma}}\dot{\gamma}=0
\]
and $F(\dot{\gamma})\equiv const$, i.e. it is a constant speed auto-parallel
curve. Also the converse holds; every auto-parallel curve is locally
a geodesic, i.e. it is locally the distance minimizing curve between
two points. We say that a $(M,F)$ is forward geodesically complete
if the space is complete and any auto-parallel curve $\gamma:[0,t]\to M$
with $D_{\dot{\gamma}}^{\dot{\gamma}}\dot{\gamma}=0$ can be extended
to an auto-parallel curve beyond $t$. 

In particular, if a connection preserves two different Finsler norms
$F_{1}$ and $F_{2}$ on $M$ then their auto-parallel curves agree
and their geodesics are the same, i.e. $\gamma$ is auto-parallel
w.r.t. the first spaces iff it is w.r.t. the second space; geodesics
only differ in their speed. In such a case we say that the two spaces
are \emph{affinely equivalen}t.
\begin{lem}
Let $F_{1}$ and $F_{2}$ be two Berwald structure on $M$ such that
their induced connections agree. Then they are affinely equivalent.
\end{lem}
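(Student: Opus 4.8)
The plan is to reduce everything to the common affine connection and then invoke the characterization of geodesics in Berwald spaces that was recalled just above the statement. First I would fix notation: let $\nabla$ be the torsion-free affine connection induced by $F_1$ in the sense of the definition of a Berwald space, and let $D$ be its associated covariant derivative. By hypothesis $\nabla$ is also the connection induced by $F_2$. In particular $\nabla$ preserves the Finsler norm $F_1$ (it is the Berwald connection of $F_1$) and it preserves $F_2$ (it is the Berwald connection of $F_2$); and the operator $D$, together with its auto-parallel curves, is one and the same object regardless of which of the two structures we view it as coming from.

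The next step is to observe that the notion ``$\gamma$ is auto-parallel'', i.e. $D_{\dot\gamma}\dot\gamma=0$, is formulated purely in terms of $D$, hence of $\nabla$, and refers to neither $F_1$ nor $F_2$. Combining this with the fact stated above — in a Berwald space the auto-parallel curves are exactly the curves that are locally distance-minimizing — one gets the chain of equivalences: a curve is locally an $F_1$-geodesic $\iff$ it is auto-parallel for $\nabla$ $\iff$ it is locally an $F_2$-geodesic. Thus $(M,F_1)$ and $(M,F_2)$ have the same unparametrized geodesics.

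It then remains to account for the parametrization, which is where the norm-preserving property is used. Along an auto-parallel curve $\gamma$ the tangent $\dot\gamma_t$ is the $\nabla$-parallel transport of $\dot\gamma_0$, so $F_i(\dot\gamma_t)$ is constant in $t$ for $i=1,2$; every $F_i$-geodesic is therefore run at constant $F_i$-speed. Hence if $\gamma$ is, say, a unit-speed $F_1$-geodesic, the same curve is an $F_2$-geodesic after the constant rescaling of the parameter by $F_2(\dot\gamma_0)$. So the geodesics of the two spaces agree up to speed, which is exactly the definition of affine equivalence given above, and this proves the lemma.

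There is essentially no hard part: the whole argument is bookkeeping on top of the already-recalled equivalence ``geodesic $\iff$ constant-speed auto-parallel curve'' valid in Berwald spaces. The one point I would be careful not to gloss over is that this equivalence, applied to each $F_i$ separately, relies on $\nabla$ being both torsion-free and $F_i$-norm-preserving; the hypothesis hands us a single $\nabla$ enjoying both properties for $F_1$ and for $F_2$ at once, which is precisely what forces the two geodesic classes to coincide.
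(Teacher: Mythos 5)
Your argument is correct and is essentially the paper's own: the paper derives this lemma directly from the preceding observation that geodesics of a Berwald space are exactly the constant-speed auto-parallel curves of its induced connection, so a shared connection forces the two geodesic classes to coincide up to parametrization speed. Your additional care about where torsion-freeness and norm-preservation for each $F_i$ enter is sound but does not change the route.
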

Because on a Berwald space $F$ and the reverses Finsler structure
$\bar{F}(v)=F(-v)$ have the same induced connections, a reversed
geodesic is also auto-parallel and therefore they are (local) geodesics
as well. In particular, a forward geodesically complete Berwald space
is necessary backward geodesically complete. In such a case we just
say the Berwald space is geodesically complete. 

An important ingredient of the Berwald classification theorem was
the following result.
\begin{thm}
[Berwald metrization {\cite{Szabo1981}}] If $(M,F)$ is a Berwald
space with induced connection $\nabla$ then there exist (uniquely
defined) Riemannian metric $g$ whose Levi-Civita connection is $\nabla$.
In particular, $(M,F)$ and $(M,g)$ are affinely equivalent.
\end{thm}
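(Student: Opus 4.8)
The plan is to produce $g$ by attaching to each tangent space a Riemannian inner product canonically determined by the Minkowski norm $F_x$, so that the Berwald property forces $\nabla g=0$ for free. First I would record that for a Berwald space the parallel transport $P_\gamma\colon(T_xM,F_x)\to(T_yM,F_y)$ along any $C^\infty$ curve is a \emph{linear} isometry of the two norms --- linearity and norm-preservation being exactly the defining properties of the Berwald connection. Then I would fix, once and for all, a rule assigning to a Minkowski norm on an $n$-dimensional vector space an inner product, and which is \emph{natural under linear isomorphisms}: for instance one may average the fundamental tensor $g_{ij}(v)$ over the indicatrix $\{F_x=1\}$ with respect to a naturally associated measure, or take a John-ellipsoid / Binet--Legendre-type integral over the unit ball $\{F_x\le1\}$. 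Since $F$ is $C^\infty$ off the zero section and each $\{F_x\le1\}$ is, by strong convexity and positive homogeneity, a compact convex body with $0$ in its interior, this rule applied pointwise yields a smooth Riemannian metric $g=(g_x)_{x\in M}$ on all of $M$ (no simple connectivity needed).

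The next step is the crux: naturality gives $P_\gamma^{\ast}g_y=g_x$ for every curve $\gamma$ from $x$ to $y$, because $P_\gamma$ is a linear isometry of the norms and hence carries the inner products canonically built from them to one another. Thus $g$ is invariant under $\nabla$-parallel transport, and differentiating $t\mapsto g_{\gamma_t}(X_t,Y_t)$ for $\nabla$-parallel fields $X,Y$ along an arbitrary curve, using $\nabla_{\dot\gamma}X=\nabla_{\dot\gamma}Y=0$, shows $(\nabla_{\dot\gamma}g)(X,Y)\equiv0$; as $\gamma,X_0,Y_0$ are arbitrary, $\nabla g=0$. So $\nabla$ is a metric connection for $g$ and, being torsion-free by hypothesis, it coincides with the Levi-Civita connection of $g$ by uniqueness of the latter. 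The final clause is then just the Lemma above applied to $F$ and the Finsler structure of $g$: sharing the connection $\nabla$, they have the same auto-parallel curves, hence the same geodesics up to reparametrization, i.e. $(M,F)$ and $(M,g)$ are affinely equivalent.

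For the uniqueness clause, if $g'$ also has Levi-Civita connection $\nabla$ then both $g$ and $g'$ are $\nabla$-parallel sections of $S^2T^{\ast}M$, so at a base point $x_0$ one has $g'_{x_0}=g_{x_0}(A\,\cdot\,,\,\cdot\,)$ with $A$ positive-definite, $g_{x_0}$-self-adjoint, and commuting with the holonomy group $\mathcal H_{x_0}$; on an irreducible de Rham factor $\mathcal H_{x_0}$ acts irreducibly on $T_{x_0}M$ and Schur's lemma forces $A$ to be a scalar, so $g$ is unique up to a positive constant on each factor, which is the intended meaning of ``uniquely defined''.

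I expect the genuinely delicate point to be Step~1, namely exhibiting a Minkowski-to-inner-product rule that is simultaneously \emph{natural under linear isomorphisms} --- which is exactly what makes the invariance in Step~2 cost-free and lets us avoid averaging over (or even proving compactness of) the holonomy group, as well as any passage to the universal cover --- and visibly $C^\infty$ in $x$. Everything after that is the standard dictionary among parallel transport, metric compatibility, torsion-freeness, and uniqueness of the Levi-Civita connection.
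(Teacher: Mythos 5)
The paper offers no proof of this statement---it is quoted from Szab\'o \cite{Szabo1981}---so there is nothing internal to compare against; your outline is essentially the standard proof of the cited result (Szab\'o's original averaging construction, streamlined in the Binet--Legendre form by Matveev--Troyanov), and the logical skeleton (natural norm-to-inner-product functor $\Rightarrow$ invariance under the norm-preserving linear parallel transport $\Rightarrow$ $\nabla g=0$ $\Rightarrow$ Levi-Civita by torsion-freeness) is correct. Two small caveats. First, of the three candidate constructions you list, the John ellipsoid is natural under linear isomorphisms but is only continuous, not $C^\infty$, in the convex body, so that instantiation would fail the smoothness requirement; the Binet--Legendre integral over $\{F_x\le 1\}$, or averaging the fundamental tensor over the indicatrix against the volume it itself induces there, do give a smooth natural rule and are the ones to use. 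Second, your Schur-lemma uniqueness discussion proves uniqueness of $g$ up to a scalar on each irreducible holonomy factor (which needs the de Rham decomposition and hence completeness and simple connectedness not assumed here), whereas the paper explicitly concedes in the remark following the theorem that several metrics may be compatible with $\nabla$; ``uniquely defined'' there means only that the construction is canonical, i.e.\ intrinsic and functorial under isometries of $(M,F)$---which is exactly the naturality you build in at Step~1, so your proof already delivers the intended clause without the holonomy argument.
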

Note there might be several Riemannian metrics compatible with $\nabla$.
However, the metric $g$ is intrinsically defined and if $(M',F')$
is isometric to $(M,F)$ then also $(M,g)$ and $(M',g')$.

Szab\'o actually showed only certain connections admit non-Riemannian
Berwald structures. His idea was to use the fact that an affine connection
induces a uniquely defined notion of parallel transport on the tangent
bundle. In particular, the Finsler norm $F_{x}$ at $T_{x}M$ needs
to be invariant by the holonomy group $\mathcal{H}_{x}$. However,
this is a rather rigid condition.
\begin{defn}
[Affine rigidity] A Berwald space $(M,F)$ is affinely rigid if for
every other Berwald space $(M,F')$ affinely equivalent to $(M,F)$,
i.e. their induced connection agree, it holds $F=\lambda F'$ for
some $\lambda>0$.
\end{defn}
Because every Berwald space is affinely equivalent to a Riemannian
manifold, the only affinely rigid Berwald spaces are Riemannian manifolds.
Now from the Berger-Simmons theorem and their corollaries we get the
following affine rigidity theorem.
\begin{thm}
[Affine rigidity classification]\label{thm:affine-rigid} Assume
$(M,F)$ is an irreducible geodesically complete Berwald space. Then
$(M,F)$ is either a affinely rigid Riemannian manifold or a higher
rank symmetric Berwald space. Furthermore, every continuous function
$G:TM\to\mathbb{R}$ which is invariant under parallel transport has
the following form
\[
G(v)=\varphi(F(v))
\]
for some $\varphi:[0,\infty)\to\mathbb{R}$. \end{thm}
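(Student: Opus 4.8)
The plan is to push the whole problem into Riemannian geometry via the Berwald metrization theorem and then quote the holonomy classification. We would work from the outset with a simply connected $M$ (which is the case for the factors $M_i$ to which the theorem is applied, and which makes ``irreducible'' mean that the holonomy representation is irreducible). Let $\nabla$ be the connection induced by $(M,F)$ and $g$ the Riemannian metric supplied by the metrization theorem, so that $\nabla$ is the Levi--Civita connection of $g$ and $(M,g)$ is again irreducible and geodesically complete. The single fact we would use over and over is that parallel transport preserves both $g$ (being the Levi--Civita transport) and $F$ (the Berwald condition $\nabla F=0$), so that $\mathcal{H}_x$ fixes both the Euclidean norm $\|\cdot\|_{g_x}$ and the Minkowski norm $F_x$ on $T_xM$. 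This puts us in the hypotheses of the Berger--Simmons theorem, and we split into two cases according to whether $\mathcal{H}_x$ acts transitively on the $g_x$-unit sphere.

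In the transitive case, every continuous $\mathcal{H}_x$-invariant function on $T_xM$ is constant on $g_x$-spheres. Applying this to $F_x$ and using positive homogeneity gives $F_x=c(x)\|\cdot\|_{g_x}$ with $c(x)>0$, and since $F$ and $g$ are parallel and $M$ is connected, $c$ is a constant; hence $(M,F)=(M,c^{2}g)$ is Riemannian. It is affinely rigid: any Berwald structure $F'$ affinely equivalent to $F$ has $F'_x$ again $\mathcal{H}_x$-invariant, hence (as above) a constant multiple of $\|\cdot\|_{g_x}$, so $F'=\lambda F$. Running the same argument with an arbitrary continuous parallel-invariant $G$ in place of $F'$ yields $G_x=\varphi_x\circ F_x$ for a function $\varphi_x\colon[0,\infty)\to\mathbb{R}$, and parallelism of $G$ and $F$ together with connectedness of $M$ forces $\varphi_x$ to be independent of $x$, which gives the formula $G=\varphi\circ F$.

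In the non-transitive case we would invoke the Berger--Simmons theorem: $(M,g)$ is an irreducible, simply connected Riemannian symmetric space, so $(M,F)$ is a symmetric Berwald space and $\mathcal{H}_x$ is its (compact) linear isotropy group; it then remains to show the rank is at least two (the case $\dim M=1$ being vacuous, assume $n:=\dim M\ge2$). Suppose the rank were $1$. Then Lemma \ref{lem:holonRk1} gives that the orbit $\mathcal{O}=\mathcal{H}_x v$ of a $g_x$-unit vector $v$ meets every line through the origin of $T_xM$, so composition with the projection $S^{n-1}\to\mathbb{R}P^{n-1}$ produces a \emph{surjection} $\mathcal{O}\to\mathbb{R}P^{n-1}$. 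A smooth map cannot raise dimension, so $\dim\mathcal{O}\ge n-1$; but $\mathcal{O}$ is a compact embedded submanifold of $S^{n-1}$ (an orbit of the compact group $\mathcal{H}_x$), hence, being of full dimension, it is open and closed in the connected manifold $S^{n-1}$ and therefore equals $S^{n-1}$ --- contradicting non-transitivity. Hence the rank is $\ge2$, and $(M,F)$ is a higher rank symmetric Berwald space, which in particular contains a totally geodesic flat submanifold of dimension equal to its rank.

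The one step that is more than bookkeeping is this rank-one reduction: upgrading ``the holonomy orbit meets every line'' (Lemma \ref{lem:holonRk1}) to transitivity of $\mathcal{H}_x$ on the sphere. This is exactly where the dimension count through the projectivization $S^{n-1}\to\mathbb{R}P^{n-1}$ is needed, and where connectedness and compactness of $\mathcal{H}_x$ are used --- which is the reason we reduced to a simply connected $M$ at the start; the weaker statement ``$v$ or $-v$ lies in every orbit'', which is all Lemma \ref{lem:holonRk1} gives without the dimension argument, would not suffice. Everything else is a transcription of the Riemannian holonomy dichotomy through the Berwald metrization theorem, the displayed formula for $G$ being the transitivity computation already made in the first case.
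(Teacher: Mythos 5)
Your proposal is correct and agrees with the paper's proof in the transitive case (there the two arguments are essentially identical: holonomy invariance of $F_x$ plus transitivity forces $F_x$ to be a multiple of $\|\cdot\|_{g_x}$, and the same computation gives $G=\varphi\circ F$). Where you genuinely diverge is in the treatment of the rank-one subcase. The paper keeps the rank-one symmetric case alive and disposes of it by a purely topological argument on the function $G$ itself: by Lemma \ref{lem:holonRk1} the restriction of $G$ to $S_xM$ takes only the two values $G(v)$ and $G(-v)$, and if these differed the two open sets $A_<$ and $A_>$ would disconnect $S_xM$; hence $G$ is constant on $g$-spheres and the space is again Riemannian and rigid. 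You instead upgrade Lemma \ref{lem:holonRk1} to full transitivity of $\mathcal{H}_x$ on the sphere via the surjection $\mathcal{O}\to\mathbb{R}P^{n-1}$, the fact that smooth maps do not raise dimension, and compactness of the orbit, thereby showing the rank-one non-transitive case is vacuous. Your route proves slightly more (it recovers the classical fact that rank-one symmetric spaces are isotropic) at the cost of needing the orbit to be a compact embedded submanifold, i.e.\ closedness of the holonomy group, which in turn leans on simple connectedness --- an assumption you state explicitly and the paper uses implicitly through Berger--Simmons; the paper's connectedness-of-$S_xM$ argument is more elementary and needs nothing beyond continuity of $G$. One shared caveat: as the paper itself observes after Theorem \ref{thm:Berwald-deRham}, the displayed formula $G=\varphi\circ F$ genuinely fails for higher rank symmetric Berwald spaces, so the ``furthermore'' clause should be read as applying only in the affinely rigid (Riemannian) branch; neither your proof nor the paper's establishes it in the symmetric branch, and neither could.
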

\begin{proof}
Let $\mathcal{H}_{x}$ be the holonomy group of the connection acting
on $(T_{x}M,F_{x})$. If $\mathcal{H}_{x}$ acts transitively on the
unit sphere then any function $G$ is uniquely determined by the length
of the vector, i.e. $G(v)=\varphi(F(v))$. In particular, $F_{x}$
is a norm coming from an inner product, i.e. $(M,F)$ is Riemannian.
Now let $(M,F')$ be another Berwald space with the same connection.
Then $F'$ is preserved by parallel transport and therefore $F'=\varphi\circ F$.
By positive $1$-homogenity we see that $F'=\lambda F$, i.e. $(M,F)$
is affinely rigid. 

If the holonomy group does not act transitively then $(M,\nabla)$
is a symmetric space. If $(M,\nabla)$ is a higher rank symmetric
space then there is nothing to prove. So assume $(M,\nabla)$ has
rank $1$. Note that $M$ admits a Riemannian metric $g$ with Levi-Civita
connection $\nabla$.

Let $G$ be a continuous function invariant under parallel transport.
In particular, it is invariant under the holonomy group. Then Lemma
\ref{lem:holonRk1} implies that $G$ is uniquely determined by its
restriction to a one dimensional subspace $V$ of $T_{x}M$. Indeed,
for every $w\in T_{x}M$ there is an $h\in\mathcal{H}_{x}$ such that
$hw=v\in V$ and $G(w)=G(v)$. It remains to show that $G(v)=G(-v)$. 

Assume by contradiction $G(v)\ne G(-v)$. Then the following sets
are two disjoint open subsets covering the $S_{x}M$ w.r.t. the Riemannian
metric $g$: 
\begin{eqnarray*}
A_{<} & = & \{w\in S_{x}M\,|\, G(w)<\frac{G(v)+G(-v)}{2}\}\\
A_{>} & = & \{w\in S_{x}M\,|\, G(w)>\frac{G(v)+G(-v)}{2}\}.
\end{eqnarray*}
However, $S_{x}M$ is connected and cannot be covered by disjoint
open sets. Therefore, $G(v)=G(-v)$, i.e. $G(v)=\varphi(g(v))$. Similar
to the transitive case, $F$ must be induced by a Riemannian metric
and any other Berwald norm must be a multiple of $F$.
\end{proof}
In case the function $G(v)=F(v,v')$ where $(v,v')$ is a tangent
vector on $M=M_{1}\times M_{2}$ and $F$ a Finsler structure,  the
theorem has the following interpretation: Fix a vector $v'\in(TM_{2})_{y}$
with $F(0,v')<1$. Then 
\[
A_{v'}=\{v\in(TM_{1})_{x}\,|\, F(v,v')\le1\}
\]
is strictly convex in $TM_{(x,y)}$ with smooth boundary containing
the origin of $TM_{1}$ in its interior. Thus it represents a Finsler
norm on $TM_{1}$. If $(M_{1},F_{1})$ is affinely rigid this norm
must be the unique Finsler norm on $(M_{1},F_{1})$.
\begin{thm}
[Berwald de Rham decomposition]\label{thm:Berwald-deRham}Let $(M,F)$
be a simply connected, geodesically complete Berwald space. Then $(M,F)$
is smoothly isometric to a product $(M_{0}\times\cdots\times M_{n},\hat{F})$
given by the de Rham decomposition where $i:M\to M_{0}\times\cdots\times M_{n}$
is the de Rham isometry and $\hat{F}=i_{*}F$. Furthermore, $(M_{0},F_{0})$
is a Minkowski space and a curve in $(M,F)$ is a geodesic iff each
of its projection is a geodesic. 

If the de Rham decomposition has no factor which is a higher rank
symmetric space then the Finsler norm $\hat{F}$ is given by 
\[
\hat{F}(v_{0},\ldots,v_{n})=G(v_{0},F_{1}(v_{1}),\ldots,F_{n}(v_{n}))
\]
where $G:\mathbb{R}^{l+n}\to[0,\infty)$ is a Minkowski norm on $\mathbb{R}^{\dim M_{0}+n}$
which is symmetric in the last $n$-factors.\end{thm}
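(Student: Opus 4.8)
The plan is to reduce the statement to the classical de Rham Decomposition Theorem by means of the Berwald metrization theorem, and then to read off the product form of the Finsler norm from holonomy invariance together with the affine rigidity classification (Theorem~\ref{thm:affine-rigid}).

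First I would apply the Berwald metrization theorem to obtain the Riemannian metric $g$ on $M$ whose Levi-Civita connection is the induced connection $\nabla$ of $(M,F)$. Since $(M,F)$ and $(M,g)$ are affinely equivalent they have the same auto-parallel curves, so geodesic completeness of $(M,F)$ forces $(M,g)$ to be geodesically complete; being simply connected it satisfies the hypotheses of the de Rham Decomposition Theorem, which yields $M=M_{0}\times\cdots\times M_{n}$ with $g=\sum g_{i}$, $(M_{0},g_{0})$ Euclidean, each $(M_{i},g_{i})$ with $i\ge1$ carrying an irreducible connection $\nabla_{i}$, and $\nabla=\sum\nabla_{i}$. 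Let $i\colon M\to M_{0}\times\cdots\times M_{n}$ be this isometry and $\hat F=i_{*}F$; by construction $(M,F)$ is smoothly isometric to $(M_{0}\times\cdots\times M_{n},\hat F)$. Because $\nabla$ splits, parallel transport along a curve of the product is a tuple $(P^{0},\ldots,P^{n})$ whose $i$-th entry only depends on the $i$-th projection of the curve; hence $F_{i}:=\hat F|_{TM_{i}}$ is a Berwald structure on $M_{i}$ with connection $\nabla_{i}$ (strong convexity being inherited by restriction to the subspace $TM_{i}$), it is preserved by each $P^{i}$, and $P^{0}$ is a translation because $(M_{0},g_{0})$ is flat and simply connected. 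Invariance of $\hat F$ under $P^{0}$ then shows $F_{0}:=\hat F|_{TM_{0}}$ is translation invariant, i.e. $(M_{0},F_{0})$ is a Minkowski space. Finally, geodesics of $(M,F)$ are exactly the constant-speed auto-parallel curves of $\nabla$, and since $\nabla=\sum\nabla_{i}$ a curve is $\nabla$-auto-parallel iff each of its projections is $\nabla_{i}$-auto-parallel; the constant-speed condition being automatic (each $\nabla_{i}$ preserves $F_{i}$), a curve in $(M,F)$ is a geodesic iff each projection is.

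Now suppose no $M_{i}$ is a higher rank symmetric space. For $i\ge1$ the space $(M_{i},F_{i})$ is simply connected, irreducible and geodesically complete — completeness is inherited since an auto-parallel curve of $M$ starting tangent to $TM_{i}$ has all other projections constant and so stays in $M_{i}$ — hence by Theorem~\ref{thm:affine-rigid} it is an affinely rigid Riemannian manifold; write $F_{i}=\sqrt{g_{i}(\cdot,\cdot)}$ for the (unique up to scale) metric $g_{i}$. Fix $x=(x_{0},\ldots,x_{n})$. The holonomy group $\mathcal H_{x}$ of $\nabla$ contains every transformation $(1,\ldots,h_{i},\ldots,1)$ with $h_{i}$ in the holonomy group $\mathcal H^{i}_{x_{i}}$ of $\nabla_{i}$, obtained from a loop at $x$ whose only nonconstant projection is the $i$-th one. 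Fixing all slots but the $i$-th, the map $w\mapsto\hat F_{x}(v_{0},\ldots,w,\ldots,v_{n})$ is therefore a continuous $\mathcal H^{i}_{x_{i}}$-invariant function on $T_{x_{i}}M_{i}$, and by repeating the argument of the proof of Theorem~\ref{thm:affine-rigid} — using that $\mathcal H^{i}_{x_{i}}$ acts transitively on the $g_{i}$-sphere in the genuinely Riemannian case, or Lemma~\ref{lem:holonRk1} together with the $A_{<},A_{>}$ connectedness argument in the rank one symmetric case — this function depends on $w$ only through $F_{i}(w)$. Iterating over $i=1,\ldots,n$ produces a function $G_{x}$ with $\hat F_{x}(v_{0},\ldots,v_{n})=G_{x}(v_{0},F_{1}(v_{1}),\ldots,F_{n}(v_{n}))$, and since $\hat F$ is parallel invariant, each $P^{i}$ preserves $F_{i}$, and $P^{0}$ is a path-independent translation, $G_{x}$ does not depend on $x$; call it $G$. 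To promote $G$ to a genuine norm, pick unit vectors $e_{i}\in T_{x_{i}}M_{i}$ with $F_{i}(e_{i})=1$ and set $\Sigma:=T_{x_{0}}M_{0}\oplus\mathbb R e_{1}\oplus\cdots\oplus\mathbb R e_{n}\subset T_{x}M$; restricting the Minkowski norm $\hat F_{x}$ to the linear subspace $\Sigma$ gives a Minkowski norm on $\Sigma$, and under the identification $\Sigma\cong\mathbb R^{\dim M_{0}+n}$, $(v_{0},s_{1},\ldots,s_{n})\mapsto v_{0}+\sum s_{i}e_{i}$, the displayed formula reads $\hat F_{x}|_{\Sigma}(v_{0},s_{1},\ldots,s_{n})=G(v_{0},|s_{1}|,\ldots,|s_{n}|)$, using $F_{i}(e_{i})=1$ and that $F_{i}$ is even. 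Hence the even extension of $G$ in the last $n$ variables is precisely the Minkowski norm $\hat F_{x}|_{\Sigma}$; replacing $G$ by this extension gives the claimed Minkowski norm on $\mathbb R^{\dim M_{0}+n}$, symmetric (even) in its last $n$ coordinates, with $\hat F(v_{0},\ldots,v_{n})=G(v_{0},F_{1}(v_{1}),\ldots,F_{n}(v_{n}))$.

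The step I expect to be the main obstacle is the reduction of $\hat F_{x}$ to a function of $(v_{0},F_{1}(v_{1}),\ldots,F_{n}(v_{n}))$: it needs the fact that the product holonomy contains the individual factor holonomies, a careful component-by-component iteration keeping continuity (including at the origin of each slot) under control, and — crucially — handling the rank one symmetric factors directly via Lemma~\ref{lem:holonRk1} and connectedness rather than by an a priori unavailable transitivity statement. The remaining points (inheritance of completeness and strong convexity by the factors and by $\Sigma$, and $x$-independence of $G$) are routine.
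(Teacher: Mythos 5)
Your argument follows the paper's own route: Szab\'o's metrization theorem plus the classical de Rham decomposition to obtain the product and the splitting $\nabla=\sum\nabla_i$, then slot-by-slot holonomy invariance and Theorem~\ref{thm:affine-rigid} to reduce $\hat F$ to a function of $(v_0,F_1(v_1),\ldots,F_n(v_n))$. The extra details you supply (factor holonomies sitting inside the product holonomy, $x$-independence of $G$, and realizing $G$ as the restriction of $\hat F_x$ to the subspace $\Sigma$) are correct elaborations of steps the paper leaves implicit, so the proposal is sound and essentially identical in approach.
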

\begin{rem*}
This is a more explicit statement of Szab\'o Berwald classification
theorem \cite[Theorem 1.3]{Szabo2006}. Note, however, that the product
structure is uniquely determined by a holonomy invariant Minkowski
norm on a single tangent space $T_{\mathbf{x}}M=\oplus_{i}T_{x_{i}}M_{i}$. \end{rem*}
\begin{proof}
Let $\nabla$ denote the induced connection of $(M,F)$. Then $\nabla$
splits into irreducible components
\[
\nabla=\sum_{j=0}^{n}\nabla_{j}
\]
where $\nabla_{0}$ is the sum of one dimensional connections and
for $j\ge1$ the connections $\nabla_{j}$ are irreducible. Let $(M,g)$
be the affinely equivalent Riemannian manifold associated to $(M,F)$.
We apply the de Rham Decomposition Theorem to $(M,g)$, i.e. $(M,g)$
is isometric to a product $(M_{0}\times\cdots\times M_{n})$ such
that $M_{0}$ is the maximal Euclidean factor and each $(M_{i},g_{i})$
is irreducible with connection $(\pi_{i})_{*}\nabla_{i}$, for simplicity
also denoted by $\nabla_{i}$. Now each factor represents a family
of simply connected, totally geodesic submanifolds which are isometric
to $(M_{i},g_{i})$. As $(M,F)$ and $(M,g)$ are affinely equivalent
and $(M,g)$ isometric to $(M_{0}\times\cdots\times M_{n},\sum g_{i})$
we can push forward the Finsler norm to the product. In particular,
we can equip each $M_{i}$ with a Finsler norm such that $(M_{0},F_{0})$
is a Minkowski space and $(M_{i},F_{i})$ an irreducible Berwald space
whose connections are $\nabla_{i}$. As auto-parallel curves of $(M,F)$
and $(M,g)$ agree, we see that geodesics are given by geodesic of
the factors.

It remains show that the Finsler norm has the given form if none of
the factors is a higher rank symmetric space. Note that in this case
each $(M_{i},F_{i})$ is affinely rigid and each $F_{i}$ is induced
by a Riemannian metric. 

Now for fixed $v\in V_{0}$ the functions
\[
(w_{1},\ldots,w_{n})\mapsto F(v,w_{1},\ldots,w_{n})
\]
are invariant under parallel transport in each coordinate. Now using
Theorem\ref{thm:affine-rigid} we obtain by induction a function $G:\mathbb{R}^{k+n}$
such that 
\[
F(v,w_{1},\ldots,w_{n})=G(v,F_{1}(w_{1}),\ldots,F_{n}(w_{n})).
\]
Furthermore, we see that $F$ is a Minkowski norm iff $G$ is. 

Note that the map $i_{F}:(M,F)\to(M_{0}\times\cdots\times M_{n},F)$
defined via de Rham decomposition and reassigning the Finsler norm
is obviously an isometry. In particular, it is a diffeomorphism (use
the fact that affine equivalence is a diffeomorphism and then $i_{F}$
is the de Rham isometry combined with two affine equivalences, another
way is to use the results of \cite{Deng2002}). 
\end{proof}
It is not difficult to see that in case $M$ does not contain a higher
rank symmetric factor, the distance on $M$ induced by the Finsler
structure that has the following form
\[
d_{M}((x_{0},\ldots,x_{n}),(y_{0},\ldots,y_{n}))=G(x_{1}-x_{0},d_{M_{1}}(x_{1},y_{1}),\ldots,d_{M_{n}}(x_{n},y_{n})).
\]
In this case we say that the metric space $(M,d_{M})$ is a metric
product of a Minkowski space and $n$ metric space $(M_{i},d_{M_{i}})$.

For Berwald spaces containing a higher rank symmetric factor this
is in general not true because there are several functions $v_{1}\mapsto G(v_{1})$
invariant under the holonomy group which are not given as $\varphi(F_{1}(v_{1}))$.
In particular, in general 
\[
F(v_{1},v_{2})\ne G(F_{1}(v_{1}),v_{2}).
\]
Note that if $F$ is $C^{2}$ away from the zero section then equality
can only hold iff $F_{1}$ is $C^{2}$ at $v_{1}=0$. But then$F_{1}$
is necessarily Riemannian.

\section{Flag and Ricci Curvature}

Given an affine connection $\nabla$ one can define the (Riemann)
curvature tensor as 
\[
R(V,W)Z=\nabla_{V}\nabla_{W}Z-\nabla_{W}\nabla_{V}Z-\nabla_{[V,W]}Z
\]
where $V,W,Z$ are vector fields. If $\nabla$ is the Levi-Civita
connection of $(M,g)$ then the sectional curvature spanned by $V$
and $W$ is defined as 
\[
\mathcal{K}(V,W)=\frac{g(R(V,W)W,V)}{g(V,V)g(W,W)-g(V,W)^{2}}
\]
and the Ricci curvature 
\[
\operatorname{Ric}(V)=\sum_{i=1}^{n-1}\mathcal{K}(V,e_{i})
\]
were $\{e_{i}\}_{i=1}^{n}$ is an orthonormal basis at $T_{x}M$ with
$e_{n}=V/F(V)$. Equivalently, one can write the Ricci curvature of
a vector in tensor form as follows 
\[
\operatorname{Ric}(V^{i}\partial_{i})=R_{jik}^{i}V^{k}V^{k}
\]
where $R_{jlk}^{i}$ denotes the curvature tensor in local coordinates.
Note that this does not involve the metric tensor $(g_{ij})$. In
particular, it only depends on the connection $\nabla$.

Instead of now defining the curvatures directly from the Finsler structure
we use an interpretation of the flag and Ricci curvature by Shen (see
\cite{Shen2001,Shen1997}). Assume the integral curves of the vector
field $V$ are non-constant constant speed geodesics. In this case
we say $V$ is locally a geodesic field. Then flag curvature $\mathcal{K}^{V}(V,W)$
spanned by $V$ and $W$ with flag pole $V$ is just the sectional
curvature of $g(V)$ spanned by $V$ and $W$. In a similar way one
obtains the Ricci curvature $\operatorname{Ric}(V)$ as the trace
of the curvature tensor of $g(V)$. One can actually show that those
quantities are defined on the tangent space, i.e. given other geodesic
fields $V^{'}$ and $W^{'}$ with $V_{x}=V_{x}^{'}$ and $W_{x}=W_{x}^{'}$
then $\mathcal{K}^{V}(V,W)(x)=\mathcal{K}^{V^{'}}(V^{'},W^{'})(x)$
and $\operatorname{Ric}(V)(x)=\operatorname{Ric}(V^{'})(x)$.

A Finsler manifold $(M,F)$ has flag curvature bounded from below
by $K$ if 
\[
\mathcal{K}^{V}(V,W)\ge K
\]
 for all unit vector fields $V$ and $W$. Similarly, the Ricci curvature
is bounded from below by $K$ if 
\[
\operatorname{Ric}(v)\ge KF(v)^{2}
\]
for unit vectors fields $V$. It is easy to see that a lower bound
$K$ on the flag curvature implies the same bound on the Ricci curvature. 

A totally geodesic submanifold has the same (possibly better) flag
curvature bound as the containing manifolds. For the Ricci curvature
this might not be true in general. However, we have the following
simplification (see \cite[Prop. 6.6.2]{Shen2001}). 
\begin{lem}
Let $(M,F)$ be a Berwald space with induced connection $\nabla$
then 
\[
\mathcal{K}^{V}(V,W)=\frac{g_{V}(R(V,W)W,V)}{g_{V}(V,V)g(W,W)-g_{V}(V,W)^{2}},
\]
where $R$ is the curvature tensor of $\nabla$ and $g_{V}=g(V)$.\end{lem}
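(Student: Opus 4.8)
The plan is to combine the interpretation of the flag curvature recalled just above with a comparison of two affine connections near the base point. Fix $x\in M$ with $V_{x}\neq 0$, put $v=V_{x}$, $w=W_{x}$, and --- since both sides of the asserted identity are tensorial in $V,W$ and $\mathcal{K}^{V}(V,W)(x)$ depends only on $v,w$ --- assume that $V$ is a geodesic field on a neighbourhood $U$ of $x$, so that $\nabla_{V}V=0$ on $U$ and $\hat{g}:=g(V)=g_{V}$ is a genuine Riemannian metric on $U$. By the discussion preceding the lemma, $\mathcal{K}^{V}(V,W)(x)$ is the sectional curvature at $x$, in the plane $\operatorname{span}\{v,w\}$, of the Riemannian manifold $(U,\hat{g})$, which by definition equals
\[
\frac{\hat{g}(\hat{R}(V,W)W,V)}{\hat{g}(V,V)\hat{g}(W,W)-\hat{g}(V,W)^{2}}\Big|_{x},
\]
where $\hat{R}$ is the curvature of the Levi-Civita connection $\hat{\nabla}$ of $\hat{g}$. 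Since $\hat{g}|_{x}=g_{V}$, the assertion reduces to the numerator identity $\hat{g}(\hat{R}(V,W)W,V)=g_{V}(R(V,W)W,V)$ at $x$, with $R$ the curvature of the Berwald connection $\nabla$.

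To prove this I would compare $\hat{\nabla}$ with $\nabla$ through the symmetric $(1,2)$-tensor $S:=\hat{\nabla}-\nabla$. Differentiating $\hat{g}_{ij}(x)=g_{ij}(V(x))$ in a coordinate chart and using that for a Berwald space the coefficients of $\nabla$ coincide with the reference-vector independent Chern coefficients --- hence satisfy the horizontal metric compatibility of the Chern connection --- one obtains
\[
(\nabla_{X}\hat{g})(A,B)=2\,C_{V}(A,B,\nabla_{X}V),
\]
where $C_{V}$ is the Cartan tensor evaluated at $V$; in particular $\nabla_{V}\hat{g}=0$. The Koszul formula then yields $\hat{g}(S(X,Y),Z)=C_{V}(Y,Z,\nabla_{X}V)+C_{V}(X,Z,\nabla_{Y}V)-C_{V}(X,Y,\nabla_{Z}V)$. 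Since the homogeneity relation $y^{i}C_{ijk}(y)\equiv 0$ gives $C_{V}(V,\cdot,\cdot)=0$, this together with $\nabla_{V}V=0$ forces both $S(V,\cdot)\equiv 0$ and $\hat{g}(S(\cdot,\cdot),V)\equiv 0$ on all of $U$.

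Finally one inserts these two vanishings into the standard identity
\[
\hat{R}(X,Y)Z=R(X,Y)Z+(\nabla_{X}S)(Y,Z)-(\nabla_{Y}S)(X,Z)+S(X,S(Y,Z))-S(Y,S(X,Z))
\]
with $(X,Y,Z)=(V,W,W)$ and pairs the result with $V$ using $\hat{g}$ at $x$. The terms containing $S(V,\cdot)$ vanish at once; the remaining terms vanish because $\hat{g}(S(\cdot,\cdot),V)\equiv 0$ and because $\hat{g}(\nabla_{V}Y,V)=\nabla_{V}\bigl(\hat{g}(Y,V)\bigr)$ for any vector field $Y$ on $U$, by $\nabla_{V}\hat{g}=0$ and $\nabla_{V}V=0$. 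This gives $\hat{g}(\hat{R}(V,W)W,V)=g_{V}(R(V,W)W,V)$ at $x$ and hence the lemma; alternatively one may simply invoke \cite[Prop.~6.6.2]{Shen2001}. The genuine obstacle here is the middle step --- extracting the Cartan-tensor expression for $\nabla\hat{g}$, equivalently for $S$, from the horizontal metric compatibility of the affine connection, and then using $y^{i}C_{ijk}=0$ to kill the $V$-slot. Once $S(V,\cdot)=0$ and $\hat{g}(S(\cdot,\cdot),V)=0$ are in hand, the collapse of the curvature correction terms is purely formal.
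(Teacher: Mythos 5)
Your argument is correct, and it is worth noting that the paper does not actually prove this lemma at all --- it simply cites \cite[Prop.\ 6.6.2]{Shen2001} --- so you have supplied a complete self-contained proof where the paper relies on a reference. All the key steps check out: the reduction to a geodesic field $V$ is legitimate because both sides are determined by the pointwise values $v,w$; the computation $(\nabla_{X}\hat{g})(A,B)=2C_{V}(A,B,\nabla_{X}V)$ is exactly what one gets from $\partial_{x^{k}}\hat{g}_{ij}=(\partial_{x^{k}}g_{ij})(V)+2C_{ijl}(V)\partial_{x^{k}}V^{l}$ combined with $g_{ij|k}=0$; the Koszul expression for $S=\hat{\nabla}-\nabla$, the vanishings $S(V,\cdot)=0$ and $\hat{g}(S(\cdot,\cdot),V)=0$ via $y^{i}C_{ijk}=0$ and $\nabla_{V}V=0$, and the collapse of the curvature-difference terms are all sound. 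Two small remarks. First, the paper explicitly wants to avoid Chern-connection machinery; your appeal to "horizontal metric compatibility of the Chern connection" can be replaced by the paper's own definition of a Berwald space --- parallel transport preserves $F$, hence preserves the vertical Hessian $g_{ij}(y)$, which is literally the identity $\delta_{k}g_{ij}=\Gamma_{ki}^{l}g_{lj}+\Gamma_{kj}^{l}g_{il}$ you need --- so the argument survives in the paper's preferred language. Second, you have silently (and correctly) read the denominator as $g_{V}(V,V)g_{V}(W,W)-g_{V}(V,W)^{2}$; the $g(W,W)$ in the paper's display is a typo.
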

\begin{thm}
[Ricci affine invariance]\label{thm:Ricci}The Ricci curvature of
a Berwald space is an affine invariant and only depends on the induced
connection, i.e. if $(M,F)$ and $(M,F')$ are affinely equivalent
then 
\[
\operatorname{Ric}_{F}(v)=\operatorname{Ric}_{F'}(v).
\]
In particular, having non-positive, resp. non-negative Ricci curvature
is an affine invariant.\end{thm}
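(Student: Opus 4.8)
The plan is to reduce the Finsler Ricci curvature to the purely affine contraction of the curvature tensor $R$ of the induced connection $\nabla$. Concretely, I will argue that for every $v\in TM$
\[
\operatorname{Ric}_{F}(v)=\operatorname{tr}\bigl(Z\mapsto R(Z,v)v\bigr)=R_{jil}^{i}v^{j}v^{l},
\]
where the right-hand side involves no metric tensor and hence depends only on $\nabla$. Granting this, the theorem follows: affinely equivalent Berwald spaces carry the same torsion-free norm-preserving connection (a torsion-free connection is recovered from its affinely parametrized geodesics), hence the same $R$, so $\operatorname{Ric}_{F}(v)=\operatorname{Ric}_{F'}(v)$; the statement about (non-)negativity is then trivial.

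The single Finsler ingredient I would establish first is a \emph{partial skew-symmetry} of the curvature operators with respect to the osculating tensors $g_{v}$. Since parallel transport in a Berwald space preserves $F$, expanding the holonomy of a small loop to second order gives $g_{w}(w,R(X,Y)w)=0$ for all $w,X,Y\in T_{x}M$. Differentiating this identity in $w$ — replace $w$ by $v+tu$, use Euler's identity $d(F^{2})_{w}=2g_{w}(w,\cdot)$ together with the vanishing $v^{i}C_{ijk}=0$ of the Cartan tensor contracted with $v$ (which merely expresses $0$-homogeneity of $g_{ij}(v)$) — yields, writing $A:=R(X,Y)$,
\[
g_{v}(Av,u)+g_{v}(v,Au)=0\qquad\text{for all }u\in T_{x}M .
\]
This is the replacement for the Riemannian pair symmetry of $R$, which is unavailable here because $\nabla$ is metric only for the Szab\'o metrization, a Riemannian metric in general different from the osculating $g_{v}$ that governs the flag curvature.

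I would then simply unwind the definition. Fix $v$ with $F(v)=1$ and a $g_{v}$-orthonormal basis $e_{1},\dots,e_{n}$ with $e_{n}=v$; then all denominators in the preceding Lemma equal $1$, so $\mathcal{K}^{v}(v,e_{i})=g_{v}(R(v,e_{i})e_{i},v)$ and $\operatorname{Ric}_{F}(v)=\sum_{i=1}^{n-1}g_{v}(R(v,e_{i})e_{i},v)$. Applying the partial skew-symmetry with $A=R(v,e_{i})$ and $u=e_{i}$ gives $g_{v}(R(v,e_{i})e_{i},v)=-g_{v}(R(v,e_{i})v,e_{i})$, and since $R(v,e_{i})=-R(e_{i},v)$ the right-hand side equals $g_{v}(R(e_{i},v)v,e_{i})$. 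Summing over $i$ and adding the vanishing term $g_{v}(R(v,v)v,v)=0$ for $i=n$ identifies $\operatorname{Ric}_{F}(v)$ with the $g_{v}$-trace of the endomorphism $Z\mapsto R(Z,v)v$, which is basis-independent and equals $R_{jil}^{i}v^{j}v^{l}$; by homogeneity the same identity then holds for every $v\in TM$.

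I expect the crux to be the second step. The temptation is to run the Riemannian argument verbatim through Szab\'o's metrization $g$, but the pair symmetry one obtains that way is with respect to $g$, not the osculating tensor $g_{v}$ appearing in Shen's description of the flag curvature, so it does not apply to the contraction at hand. The correct and just-sufficient substitute is the partial identity above, which is a soft consequence of the $F$-invariance of holonomy together with homogeneity of the fundamental tensor; once it is in place, the remaining computation is a one-line contraction and the passage from affine equivalence to equality of the connections is routine.
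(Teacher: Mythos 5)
Your proof is correct, and its endgame is the same as the paper's: identify $\operatorname{Ric}_{F}(v)$ with the non-metric contraction $\operatorname{tr}\bigl(Z\mapsto R(Z,v)v\bigr)=R_{jik}^{i}v^{j}v^{k}$ of the curvature tensor of the induced connection, which manifestly depends only on $\nabla$. The difference is in how that identification is justified. The paper simply quotes Shen's formula (the lemma preceding the theorem, \cite[Prop.~6.6.2]{Shen2001}) expressing the flag curvature through the affine curvature tensor and the osculating tensor $g_{v}$, and then asserts that the trace is non-metric; this silently uses the fact that $R(\cdot,v)v$ is $g_{v}$-self-adjoint, i.e.\ that the sum of flag curvatures over a $g_{v}$-orthonormal basis really is the trace of the Jacobi operator. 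You identify exactly that missing symmetry and prove it from first principles: the holonomy algebra preserves $F_{x}$, so $g_{w}(w,R(X,Y)w)=0$, and differentiating in $w$ (using $w^{i}C_{ijk}(w)=0$) gives the partial skew-symmetry $g_{v}(R(X,Y)v,u)+g_{v}(v,R(X,Y)u)=0$, which is precisely what is needed since the Riemannian pair symmetry is only available for Szab\'o's metric, not for $g_{v}$. So your argument is self-contained where the paper's is a citation, at the cost of a page of computation; both reach the same contraction, and the passage from affine equivalence to equality of Ricci curvatures is then immediate in either version (in this paper affine equivalence is by definition equality of the induced connections, so your parenthetical about recovering $\nabla$ from its parametrized geodesics is not even needed).
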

\begin{rem*}
This was implicitly used in \cite[proof of Lemma 4]{Deng2015} without
explicitly stating the affine invariance.\end{rem*}
\begin{proof}
Just note that the Ricci curvature is a non-metric contraction of
the curvature tensor. In particular, for $v=v^{i}\partial_{i}$ 
\[
\operatorname{Ric}_{g_{V}}(v)=R_{jik}^{i}(g_{v})v^{j}v^{k}.
\]
However, from the previous lemma, 
\[
R_{jik}^{i}(g_{v})v^{j}v^{k}=R_{jik}^{i}v^{j}v^{k}.
\]
\end{proof}
\begin{rem*}
Affine invariance of the Ricci curvature holds more general, i.e.
if two Finsler structure have the same Chern connection then their
Ricci curvatures agree. Indeed, if one denotes by $R_{jlk}^{i}$ the
curvature coefficients of the Chern connection then the arguments
above can be used without any change.
\end{rem*}
With the help of this we can easily show that higher rank symmetric
spaces are either compact and have positive Ricci curvature, or they
are non-compact and have negative Ricci curvature. Indeed, any symmetric
space $(M,g)$ admits a Riemannian metric, namely the Killing form,
such that $(M,g)$ is a Einstein space, i.e. $\operatorname{Ric}=\lambda g$.
If $(M,g)$ is not flat then $\lambda\ne0$. In case, $\lambda>0$
Myers' theorem implies $(M,g)$ is compact. Non-compactness in case
$\lambda<0$ follows by showing that there cannot be closed geodesics.
\begin{lem}
\label{lem:HigherRankRicci}A higher rank symmetric Berwald space
of non-negative Ricci curvature has strictly positive Ricci curvature
and is compact.
\end{lem}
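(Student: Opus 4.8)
The plan is to move the whole statement over to the associated Riemannian symmetric space, where it reduces to Myers' theorem. Let $(M,F)$ be a higher rank symmetric Berwald space with induced connection $\nabla$; in the terminology of Theorem \ref{thm:affine-rigid} this means $(M,\nabla)$ is an irreducible symmetric space of rank $\ge2$ and $\nabla$ is its canonical symmetric space connection. As recalled just above, $M$ carries an invariant Einstein metric $g$ (the one coming from the Killing form) whose Levi-Civita connection is exactly $\nabla$, so that $\operatorname{Ric}_{g}=\lambda g$ for some $\lambda\in\mathbb{R}$. By the Berwald metrization theorem $(M,F)$ and $(M,g)$ are affinely equivalent, hence by Theorem \ref{thm:Ricci} the Ricci curvature is the same function on $TM$, namely the purely connection-theoretic contraction $\operatorname{Ric}_{F}(v)=R_{jik}^{i}v^{j}v^{k}=\operatorname{Ric}_{g}(v)=\lambda\,g(v,v)$.

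Next I would pin down the sign of $\lambda$. Since $(M,F)$ has non-negative Ricci curvature, $\operatorname{Ric}_{F}(v)\ge0$ for every $v$, hence $\lambda\,g(v,v)\ge0$ and therefore $\lambda\ge0$. To exclude $\lambda=0$ I use that $(M,\nabla)$ is not flat: an irreducible symmetric space that is flat is a Euclidean line (for $\mathbb{R}^{n}$ splits off a line when $n\ge2$) and hence has rank $1$, contradicting rank $\ge2$. So $M$ is not flat and $\lambda\ne0$, giving $\lambda>0$.

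Finally, $(M,g)$ is a complete Riemannian manifold (symmetric spaces are geodesically complete) with $\operatorname{Ric}_{g}=\lambda g$ and $\lambda>0$, so Myers' theorem forces $(M,g)$, and therefore the underlying manifold $M$, to be compact. Since $M$ is compact the unit sphere bundle $\{F=1\}\subset TM$ is compact, so $g(\cdot,\cdot)$ attains a positive minimum $c$ on it and, by homogeneity, $g(v,v)\ge c\,F(v)^{2}$ for all $v$; thus $\operatorname{Ric}_{F}(v)=\lambda\,g(v,v)\ge\lambda c\,F(v)^{2}$ with $\lambda c>0$, i.e. $(M,F)$ has strictly positive Ricci curvature. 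The one step that needs care is the initial reduction: one must be sure that having non-negative Ricci curvature is genuinely a property of the connection, so that it transfers between $F$ and $g$ unchanged -- this is exactly Theorem \ref{thm:Ricci}, and it works here only because the lower bound in question is $K=0$, which makes the discrepancy between the normalizations $F(v)^{2}$ and $g(v,v)$ irrelevant. The remaining ingredients -- non-flatness of a higher rank irreducible symmetric space, geodesic completeness of symmetric spaces, and Myers' theorem -- are classical.
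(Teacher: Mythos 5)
Your proof is correct and follows essentially the same route as the paper, which establishes the lemma in the paragraph immediately preceding it: transfer the Ricci curvature to the Killing-form Einstein metric via affine invariance (Theorem \ref{thm:Ricci}), note $\lambda\ne0$ by non-flatness so $\lambda>0$, and conclude compactness by Myers' theorem. Your extra step comparing $g(v,v)$ with $F(v)^{2}$ on the compact unit sphere bundle to get a strictly positive Finslerian lower bound is a detail the paper leaves implicit, but it is the same argument.
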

Finally, some small applications to rigidity of the Berwald condition.
\begin{lem}
Assume $(M,F)$ is a Berwald spaces whose connection splits into two
(possibly reducible) factors $M_{1}$ and $M_{2}$ then $M$ cannot
have strictly positive, resp. strictly negative flag curvature at
a given flag pole, i.e. $\mathcal{K}^{V}(V,W)=0$ for all $V\ne0$
and some non-zero $W\ne V$.\end{lem}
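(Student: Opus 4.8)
The plan is to exploit the fact that a reducible connection forces the curvature tensor to vanish on "mixed" pairs of vectors, and then feed this into the formula for flag curvature of a Berwald space from the preceding lemma. Concretely, write $TM = V_1 \oplus V_2$ for the $\nabla$-invariant splitting induced by the product $M_1 \times M_2$, with $\nabla = \nabla_1 + \nabla_2$. The first step is to record that the curvature tensor $R$ of $\nabla$ satisfies $R(X,Y)Z = 0$ whenever $X,Y,Z$ are not all in the same factor; this is immediate from the definition $R(X,Y)Z = \nabla_X\nabla_Y Z - \nabla_Y\nabla_X Z - \nabla_{[X,Y]}Z$ together with reducibility, since each $\nabla_i$ only sees the component of its arguments in $V_i$ and the two pieces do not interact. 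In particular, if $V \in (V_1)_x$ and $W \in (V_2)_x$, then $R(V,W)W = 0$ and $R(V,W)V = 0$.

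Next I would invoke the Berwald flag-curvature formula from the lemma just above the statement:
\[
\mathcal{K}^V(V,W) = \frac{g_V(R(V,W)W,V)}{g_V(V,V)\,g_V(W,W) - g_V(V,W)^2},
\]
where $g_V = g(V)$ is the osculating Riemannian metric at the geodesic field $V$ and $R$ is the curvature tensor of the (metric-independent) induced connection $\nabla$. The denominator is strictly positive by strong convexity whenever $V$ and $W$ are linearly independent, so the vanishing of the numerator is equivalent to $\mathcal{K}^V(V,W) = 0$. By the first step, the numerator $g_V(R(V,W)W,V)$ vanishes as soon as $V$ lies in one factor and $W$ in the other; since $V \ne 0$ was arbitrary, this already contradicts any strict sign condition on the flag curvature at the flag pole $V$ (choose $W$ in the complementary factor, which is nonzero because both $M_1$ and $M_2$ are genuine factors, hence positive-dimensional).

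The only mild subtlety is making sure the statement is read correctly: the claim is that $M$ cannot have flag curvature of one strict sign \emph{at a given flag pole} $V$, i.e. for every nonzero $V$ there exists a nonzero $W$ (not parallel to $V$) with $\mathcal{K}^V(V,W) = 0$. For a fixed $V \ne 0$, decompose $V = V^{(1)} + V^{(2)}$ along the splitting; at least one component, say $V^{(1)} \in (V_1)_x$, is nonzero, and then any nonzero $W \in (V_2)_x$ works, being automatically independent of $V$ and killing the numerator by the curvature vanishing. This is genuinely a non-obstruction proof — the "hard part," such as it is, is simply to note that the Berwald flag-curvature lemma lets one reduce to the affine curvature tensor $R$ of $\nabla$, after which reducibility of $\nabla$ does all the work; one should just be careful to state that strong convexity guarantees a nonvanishing denominator so that the identically-zero numerator really does force $\mathcal{K}^V(V,W) \equiv 0$ rather than an indeterminate form.
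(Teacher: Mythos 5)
Your core argument is exactly the paper's: reducibility of $\nabla$ forces the affine curvature tensor to vanish on mixed pairs, and the Berwald flag-curvature lemma reduces $\mathcal{K}^{V}(V,W)$ to that tensor, so $\mathcal{K}^{V}(V,W)=0$ whenever $V$ is tangent to one factor and $W$ to the other. That already rules out any strict sign condition on the flag curvature, and is all the paper itself proves.

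However, your final step --- treating a mixed flag pole $V=V^{(1)}+V^{(2)}$ with both components nonzero by taking an \emph{arbitrary} nonzero $W\in(V_{2})_{x}$ --- does not work as written. For such $V$ and $W$,
\[
R(V,W)W \;=\; R(V^{(1)},W)W + R(V^{(2)},W)W \;=\; R(V^{(2)},W)W,
\]
and only the first summand is killed by mixed vanishing; the second is the curvature of $\nabla_{2}$ evaluated on two vectors of the \emph{same} factor and is in general nonzero. (On the Riemannian product $S^{2}\times S^{2}$, with $W\in(V_{2})_{x}$ not parallel to $V^{(2)}$, the numerator $g_{V}(R(V,W)W,V)$ is strictly positive.) The repair is to choose $W=V^{(2)}$ itself: then $R(V,V^{(2)})V^{(2)}=R(V^{(1)},V^{(2)})V^{(2)}+R(V^{(2)},V^{(2)})V^{(2)}=0$ by mixed vanishing together with antisymmetry of $R$ in its first two slots, and $W=V^{(2)}$ is linearly independent of $V$ precisely because $V^{(1)}\neq0$, so the denominator is positive by strong convexity. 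With that one correction your argument actually establishes the full ``for all $V\neq0$'' form of the statement, which is slightly more than the paper's own proof records.
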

\begin{proof}
Just note that the curvature tensor $R$ acts trivial on mixed vectors,
i.e.
\[
R(V_{1},V_{2})=0
\]
if $V_{1}$ is tangent to the $M_{1}$-factor and $V_{2}$ tangent
to the $M_{2}$-factor. Therefore, 
\[
\mathcal{K}^{V_{1}}(V_{1},V_{2})=\frac{g_{v_{1}}(R(V_{1},V_{2})V_{2},V_{1})}{g_{v_{1}}(V_{1},V_{1})g_{v_{1}}(V_{2},V_{2})-g_{V_{1}}(V_{1},V_{2})^{2}}=0.
\]
\end{proof}
\begin{lem}
If $(M,F)$ is an irreducible higher rank symmetric Berwald space
then it cannot have strictly its flag curvature strictly positive,
resp. strictly negative flag curvature at a given flag pole.\end{lem}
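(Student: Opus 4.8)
The plan is to produce, at every flag pole $V$, a companion direction $W$ for which the curvature tensor of the Berwald connection annihilates the numerator of the flag curvature, and then to read off $\mathcal{K}^{V}(V,W)=0$ from the Berwald flag curvature formula.

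First I would note that, by Theorem~\ref{thm:affine-rigid} and the Berger-Simmons theorem invoked in its proof, the induced connection $\nabla$ of an irreducible higher rank symmetric Berwald space is the Levi-Civita connection of an irreducible Riemannian symmetric space structure $(M,g)$ of some rank $r\ge 2$; consequently the curvature tensor $R$ of $\nabla$ is the Riemannian curvature tensor of this symmetric space. I would then invoke the standard structural fact that \emph{every} tangent vector of a symmetric space is tangent to some maximal flat: writing the Cartan decomposition at $x$ as $T_xM=\mathfrak p$, each $v\in\mathfrak p$ lies in a maximal abelian subspace $\mathfrak a$, and $\Sigma:=\exp_x\mathfrak a$ is then a totally geodesic flat submanifold through $x$ with $\dim\Sigma=r$ and $v\in T_x\Sigma$ (this refines the fact, already recalled before Lemma~\ref{lem:holonRk1}, that a higher rank symmetric space embeds a totally geodesic flat of dimension equal to its rank).

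Now fix an arbitrary $V\in T_xM\setminus\{0\}$, choose a maximal flat $\Sigma\ni x$ with $V\in T_x\Sigma$, and --- using $\dim\Sigma=r\ge 2$ --- pick $W\in T_x\Sigma$ linearly independent from $V$ (so $W\ne 0$ and $W\ne V$). Since $\Sigma$ is flat and totally geodesic, the restriction of $R$ to $T_x\Sigma$ vanishes; equivalently, in symmetric space notation $R(X,Y)Z=-[[X,Y],Z]$ while $[X,Y]=0$ for $X,Y\in\mathfrak a$. Either way $R(V,W)W=0$, so the flag curvature formula for Berwald spaces (the lemma preceding Theorem~\ref{thm:Ricci}) gives, with $g_V=g(V)$,
\[
\mathcal{K}^{V}(V,W)=\frac{g_{V}\!\left(R(V,W)W,V\right)}{g_{V}(V,V)\,g_{V}(W,W)-g_{V}(V,W)^{2}}=0 ,
\]
the denominator being strictly positive because $g_V$ is an inner product and $V,W$ are linearly independent. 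As $V\in T_xM\setminus\{0\}$ was arbitrary, we have produced for every such $V$ a non-zero $W\ne V$ with $\mathcal{K}^{V}(V,W)=0$; hence $(M,F)$ cannot have flag curvature everywhere strictly positive, nor everywhere strictly negative.

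The delicate point --- and the step I expect to be the main obstacle to write out fully within the paper's framework --- is the first one: upgrading ``irreducible higher rank symmetric Berwald space'' to a genuine Riemannian symmetric space possessing a maximal flat through \emph{each} prescribed tangent vector, rather than merely one flat somewhere. This is exactly what Szab\'o's classification together with the listed facts on symmetric spaces already provide, so it is essentially bookkeeping; and if only the weaker statement (zero flag curvature at \emph{some} flag pole) is wanted, the existence of a single totally geodesic flat of dimension $\ge 2$ suffices and the appeal to maximal flats can be dropped.
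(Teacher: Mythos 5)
Your proof is correct and follows essentially the same route as the paper, which simply observes that through every point there is a totally geodesic flat of dimension at least two and that the flag curvature of flags tangent to it vanishes. Your write-up is in fact more careful on the one point the paper glosses over --- that the maximal flat can be chosen to contain \emph{any} prescribed flag pole $V$, via the Cartan subalgebra argument --- which is exactly what is needed to rule out strict bounds at every flag pole rather than just at some.
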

\begin{proof}
Through every point there is an at least two dimensional totally geodesic
flat manifolds. The flag curvature to this submanifold is zero, hence
prohibiting any strict bounds.
\end{proof}
Combing these facts we see that any strictly curved space must be
irreducible and cannot be a higher rank symmetric space. By Theorem
\ref{thm:affine-rigid} it must be Riemannian.
\begin{cor}
Let $(M,F)$ be a Berwald space. If its flag curvature (see below)
is strictly positive, resp. negative then it is irreducible and Riemannian.
\end{cor}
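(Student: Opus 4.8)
The plan is to argue by contradiction, combining the three lemmas immediately preceding the corollary with the affine rigidity classification (Theorem~\ref{thm:affine-rigid}). Suppose $(M,F)$ has strictly positive flag curvature; the strictly negative case is verbatim the same. Thus $\mathcal{K}^{V}(V,W)>0$ for every flag pole $V\neq0$ and every $W$ not proportional to $V$. I would first observe that $(M,F)$ must be irreducible. Indeed, if the induced connection $\nabla$ split as $\nabla_{1}\oplus\nabla_{2}$ with associated distributions $TM_{1},TM_{2}$, then choosing $0\neq V_{1}\in TM_{1}$ and $0\neq V_{2}\in TM_{2}$ (necessarily independent) we would have $R(V_{1},V_{2})V_{2}=0$ since the curvature tensor of a reducible connection annihilates mixed planes, whence $\mathcal{K}^{V_{1}}(V_{1},V_{2})=0$, contradicting strict positivity. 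This is precisely the content of the reducibility lemma above.

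Next I would invoke Theorem~\ref{thm:affine-rigid}: an irreducible geodesically complete Berwald space is either an affinely rigid Riemannian manifold or a higher rank symmetric Berwald space. (Here one uses geodesic completeness of $(M,F)$; if one does not wish to include it in the hypotheses, pass to the universal cover, which is again a Berwald space of strictly positive flag curvature since flag curvature is a local invariant, and which is complete whenever $M$ is, and then note that being Riemannian descends to $M$ because the covering map is a local diffeomorphism.) The second alternative is ruled out by the lemma on irreducible higher rank symmetric Berwald spaces: through every point such a space carries a totally geodesic flat submanifold of dimension $\geq2$, and the flag curvature of any $2$-plane tangent to it vanishes, once more contradicting strict positivity. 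Hence $(M,F)$ lies in the first alternative, so it is an affinely rigid Riemannian manifold --- in particular Riemannian --- and we have already shown it is irreducible.

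The only genuine subtlety is the role of geodesic completeness in Theorem~\ref{thm:affine-rigid}, which it inherits from the de Rham and Berger--Simmons theorems; the rest is just bookkeeping of the earlier lemmas. One should also make explicit that ``strictly positive flag curvature'' in the hypothesis is understood, exactly as in those lemmas, as strict positivity of $\mathcal{K}^{V}(V,W)$ over all non-degenerate flags, so that the vanishing of a single flag curvature suffices to force a contradiction.
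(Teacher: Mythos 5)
Your proposal is correct and follows essentially the same route as the paper: rule out reducibility via the mixed-flag lemma, rule out the higher rank symmetric alternative via the totally geodesic flat submanifold, and conclude Riemannian from Theorem~\ref{thm:affine-rigid}. Your extra care about the geodesic completeness hypothesis (which the corollary omits but Theorem~\ref{thm:affine-rigid} requires) is a reasonable refinement that the paper glosses over.
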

This can now be used to show that there are no strictly curved symmetric
Finsler spaces. This simple argument was already used by Matveev \cite{Matveev2015}.
\begin{cor}
Let $(M,F)$ be an irreducible (locally) symmetric Finsler space with
strictly positive (resp. strictly negative) flag curvature. Then $(M,F)$
is Riemannian. \end{cor}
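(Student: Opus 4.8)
The plan is to reduce the statement to the Berwald setting and then invoke the preceding Corollary; the only substantive input is the classical fact that a (locally) symmetric Finsler space is automatically a Berwald space.

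I would establish that implication locally, as follows. Around a point $p$ the involutive geodesic symmetry $s_{p}$ is a local isometry with $s_{p}(p)=p$ and $(ds_{p})_{p}=-\operatorname{Id}$. Since $s_{p}$ preserves $F$, this last equality forces $F_{p}(-v)=F_{p}(v)$, so $F$ is reversible; hence reversed geodesics are geodesics and the spray coefficients satisfy $G^{i}(x,-y)=G^{i}(x,y)$. Using $s_{p}^{2}=\operatorname{id}$ one linearises $s_{p}$ to the map $x\mapsto-x$ in a suitable chart centred at $p$ (pass from an initial chart $x$ to $\tfrac{1}{2}(x-x\circ s_{p})$), and then $s_{p}$-invariance of the geodesic spray reads $G^{i}(-x,-y)=-G^{i}(x,y)$. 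Differentiating this identity three times in $y$ and setting $x=0$, together with the reversibility relation, forces the Berwald curvature $\partial^{3}G^{i}/\partial y^{j}\partial y^{k}\partial y^{l}$ to vanish at $p$; as $p$ was arbitrary it vanishes identically, i.e. the Chern connection is independent of the reference vector, so $(M,F)$ is Berwald. (This is well known; see \cite{Matveev2015} and the literature on symmetric Finsler spaces.)

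Now that $(M,F)$ is a Berwald space with strictly positive, resp. strictly negative, flag curvature, the preceding Corollary applies verbatim and gives that $(M,F)$ is irreducible and Riemannian --- which is the assertion. In fact the irreducibility hypothesis in the statement is already part of that conclusion: if one prefers to use only the Lemmas of this section, a nontrivial splitting of $\nabla$ would produce a flag with vanishing flag curvature, and a higher rank symmetric Berwald factor contains a flat totally geodesic submanifold with the same defect, so the strict sign bounds rule out both, leaving an irreducible, non-higher-rank-symmetric Berwald space, which is Riemannian by Theorem \ref{thm:affine-rigid}.

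The genuine work is entirely in the first step; everything after it merely restates Szab\'o's classification as recorded above. Within that step the points needing a little care are the passage from ``$s_{p}$ is an isometry'' to reversibility of $F$ and the linearisation of $s_{p}$, but both are routine, which is why I would quote the fact that symmetric Finsler spaces are Berwald rather than reprove it here.
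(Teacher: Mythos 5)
Your proposal is correct and follows essentially the same route as the paper: the paper's proof simply cites the fact that a locally symmetric Finsler space is locally Berwald (Matveev--Troyanov, Theorem 9.2 of \cite{Matveev2012}) and then applies the preceding corollary. The only difference is that you sketch the symmetry/linearisation argument for that cited fact, which is a correct (and standard) derivation but not needed beyond the citation.
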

\begin{proof}
Just note that every locally symmetric Finsler space is locally Berwald
(see \cite[Theorem 9.2]{Matveev2012}). Combining with the above we
obtain that the space is Riemannian.
\end{proof}

\section{Splitting theorems}

The splitting theorem is now an easy consequence of the Berwald de
Rham decomposition (Theorem \ref{thm:Berwald-deRham}) and affine
invariance of the Ricci curvature (see Theorem \ref{thm:Ricci}).
First recall the Cheeger-Gromoll splitting theorem.
\begin{lem}
[Cheeger-Gromoll Splitting Theorem] Let $(M,g)$ be a geodesically
complete Riemannian manifold of non-negative Ricci curvature containing
an isometrically embedded line $\eta:\mathbb{R}\to M$. Then $(M,g)$
is isometric to a Cartesian product $(M'\times\mathbb{R},g'+(\cdot,\cdot))$
where $g'$ induces a Riemannian metric on $M'$ whose Ricci curvature
is non-negative. In particular, the Levi-Civita connection of $M$
contains a flat one dimensional factor acting on the $\mathbb{R}$-factor. \end{lem}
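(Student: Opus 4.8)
This is the classical Cheeger--Gromoll splitting theorem \cite{Cheeger1971}; I would reproduce its standard proof, whose only external inputs are the Laplacian comparison theorem (this is where $\operatorname{Ric}\ge0$ enters) and the Bochner formula. First I would split $\eta$ into the two rays $\eta^{\pm}(t)=\eta(\pm t)$, $t\ge0$, and introduce the Busemann functions
\[
b^{\pm}(x)=\lim_{t\to\infty}\bigl(d(x,\eta^{\pm}(t))-t\bigr),
\]
the limits existing because $t\mapsto d(x,\eta^{\pm}(t))-t$ is non-increasing and bounded below by $-d(x,\eta(0))$, and each $b^{\pm}$ being $1$-Lipschitz. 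Since $\eta$ is minimizing, $d(x,\eta^{+}(t))+d(x,\eta^{-}(t))\ge2t$, so $b^{+}+b^{-}\ge0$ on $M$, with equality at $\eta(0)$.

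The analytic core is to show each $b^{\pm}$ is superharmonic in the barrier sense: for every $p$ and $\varepsilon>0$ there is a smooth $h$ near $p$ with $h\ge b^{\pm}$, $h(p)=b^{\pm}(p)$ and $\Delta h(p)\le\varepsilon$. One takes an asymptotic ray $\sigma$ from $p$ (a limit of minimizing segments $p\to\eta^{\pm}(t)$) and sets $h(\cdot)=d(\cdot,\sigma(s))-s+b^{\pm}(p)$ for $s$ large; the $1$-Lipschitz bound and the fact that $b^{\pm}$ decreases at unit rate along $\sigma$ give $h\ge b^{\pm}$ with equality at $p$, and the Laplacian comparison estimate $\Delta d(\cdot,\sigma(s))\le(n-1)/d(\cdot,\sigma(s))$, valid for $\operatorname{Ric}\ge0$ away from the cut locus, gives $\Delta h(p)\le(n-1)/s\le\varepsilon$. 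Thus $b:=b^{+}+b^{-}$ is barrier-superharmonic, $\ge0$, and equals $0$ at the interior point $\eta(0)$; the strong minimum principle for barrier-superharmonic functions (Calabi) forces $b\equiv0$. Hence $b^{+}=-b^{-}$ is simultaneously barrier-super- and barrier-subharmonic, so it is a weak, and by elliptic regularity a smooth, harmonic function.

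Writing $u:=b^{+}$, I would next verify $\|\nabla u\|\equiv1$ (its gradient flow lines are the asymptotic rays of $\eta^{+}$) and plug this into the Bochner formula, which because $\Delta u=0$ reduces to
\[
0=\tfrac12\Delta\|\nabla u\|^{2}=\|\operatorname{Hess}u\|^{2}+\operatorname{Ric}(\nabla u,\nabla u).
\]
With $\operatorname{Ric}\ge0$ both terms vanish; in particular $\operatorname{Hess}u\equiv0$, so $X:=\nabla u$ is a parallel unit vector field. Being a gradient, $X$ satisfies $\mathcal{L}_{X}g=2\operatorname{Hess}u=0$, hence its flow $\phi_{t}$ acts by isometries; its orbits are unit-speed geodesics ($\nabla_{X}X=0$), hence complete, and $u(\phi_{t}(p))=u(p)+t$. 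Fixing the level set $M'=u^{-1}(0)$, the map $(p,t)\mapsto\phi_{t}(p)$, $M'\times\mathbb{R}\to M$, is then a diffeomorphism --- injective by $u\circ\phi_{t}=u+t$, surjective by geodesic completeness --- pulling $g$ back to $g'+dt^{2}$ with $g'=g|_{M'}$, because $\phi_{t}$ is an isometry and $X=\nabla u$ is orthogonal to the level sets. Finally, the curvature tensor of a Riemannian product is the direct sum of those of the factors, so $\operatorname{Ric}_{M'}\oplus0=\operatorname{Ric}_{M}\ge0$ yields $\operatorname{Ric}_{M'}\ge0$, and the $\mathbb{R}$-factor is flat with parallel tangent field $X$, which is the final claim. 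I expect the main obstacle to be the second paragraph: setting up barrier-superharmonicity of the Busemann functions rigorously, invoking the correct strong maximum principle, and the elliptic bootstrap from weak to smooth harmonic; the product-structure manipulations in the last paragraph are then essentially formal.
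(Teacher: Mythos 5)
The paper offers no proof of this lemma at all --- it is quoted as the classical Cheeger--Gromoll splitting theorem with a citation to \cite{Cheeger1971}, so there is nothing internal to compare against. Your sketch is the standard and correct argument (Busemann functions, barrier-superharmonicity via Laplacian comparison, Calabi's strong minimum principle, Bochner formula, flow of the parallel gradient field), and it correctly accounts for the final clause about the flat one-dimensional factor; the only compression worth flagging is that the vanishing of $\tfrac12\Delta\|\nabla u\|^{2}$ comes from $\|\nabla u\|\equiv1$, while $\Delta u=0$ is what kills the $g(\nabla u,\nabla\Delta u)$ term in Bochner --- both of which you had in hand.
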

\begin{thm}
\label{thm:CheegerGromoll}Let $(M,g)$ be a simply connected geodesically
complete Riemannian with non-negative Ricci curvature. Then $M$ is
isometric to a product $(M_{0}\times\cdots\times M_{n},\sum g_{i})$
where $M_{0}$ is a flat Euclidean space and each $(M_{i},g_{i})$
is a simply connected irreducible Riemannian manifold of non-negative.
Furthermore, each line contained in $M$ is tangent to the $M_{0}$-factor
and has constant $M_{i}$-coordinate.\end{thm}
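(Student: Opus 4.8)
The plan is to assemble the statement from the de Rham Decomposition Theorem and the Cheeger-Gromoll Splitting Theorem, the latter applied to each irreducible factor separately. First I would apply the de Rham Decomposition Theorem to the simply connected, geodesically complete manifold $(M,g)$: this writes $M=M_0\times\cdots\times M_n$ with $g=\sum_i g_i$, where $(M_0,g_0)$ is Euclidean and maximal with this property, each $(M_i,g_i)$ for $i\ge1$ is simply connected, geodesically complete and irreducible, and the Levi-Civita connection splits as $\nabla=\sum_i\nabla_i$.

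Second, I would record that each factor has non-negative Ricci curvature. Since $\nabla$ is reducible along the product, its curvature tensor $R$ vanishes on any pair of vectors tangent to two different factors, so for $v=(v_0,\ldots,v_n)\in T_xM$ one has the additivity $\operatorname{Ric}_M(v)=\sum_i\operatorname{Ric}_{M_i}(v_i)$. Choosing $v$ tangent to a single $M_i$ gives $\operatorname{Ric}_{M_i}(v_i)=\operatorname{Ric}_M(v)\ge0$, while $M_0$ is flat by construction; together with the fact (part of the de Rham decomposition) that the factors are simply connected, this yields the claimed product structure.

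Third, the statement about lines. Let $\eta:\mathbb{R}\to M$ be an isometrically embedded line and write $\eta=(\eta_0,\ldots,\eta_n)$. A geodesic in a Riemannian product is exactly a product of constant-speed geodesics, so each $\eta_i$ is a geodesic of $M_i$ of some speed $c_i\ge0$ with $\sum_i c_i^2=1$, and the product distance is Pythagorean, $d_M(\eta(s),\eta(t))^2=\sum_i d_{M_i}(\eta_i(s),\eta_i(t))^2$. Combining $d_M(\eta(s),\eta(t))=|s-t|$ with the length bound $d_{M_i}(\eta_i(s),\eta_i(t))\le c_i|s-t|$ forces equality in every coordinate, so whenever $c_i>0$ the curve $\eta_i$ is, after rescaling to unit speed, an isometrically embedded line in $M_i$. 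If this occurred for some $i\ge1$, then $(M_i,g_i)$ would be a geodesically complete Riemannian manifold with $\operatorname{Ric}_{M_i}\ge0$ containing a line, hence by the Cheeger-Gromoll Splitting Theorem isometric to $M_i'\times\mathbb{R}$; this makes $\nabla_i$ reducible (and if $M_i'$ is a point, $M_i=\mathbb{R}$ is an extra flat factor), contradicting either the irreducibility of $M_i$ or the maximality of $M_0$. Hence $c_i=0$ for every $i\ge1$, i.e. $\eta$ is tangent to the $M_0$-factor and has constant $M_i$-coordinate.

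I do not expect a real obstacle: the theorem is essentially a repackaging of de Rham and Cheeger-Gromoll. The only points requiring care are the bookkeeping of the de Rham decomposition — one genuinely uses that $M_0$ is the \emph{maximal} Euclidean factor, so that a Cheeger-Gromoll splitting of some $M_i$ with $i\ge1$ produces a contradiction — and the elementary observation, via the Pythagorean identity for the product distance, that a line in the product projects to a line or a point in each factor.
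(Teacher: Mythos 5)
Your proposal is correct and follows essentially the same route as the paper: apply the de Rham decomposition, note each irreducible factor inherits non-negative Ricci curvature, and show that a line with non-constant $M_i$-coordinate ($i\ge1$) forces a line in $M_i$ itself, whence Cheeger--Gromoll makes $\nabla_i$ reducible, a contradiction. The paper's proof is just a terser version of this; your Pythagorean-identity argument for projecting the line to the factors is exactly the detail the paper leaves implicit.
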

\begin{proof}
This follows by combining the de Rham decomposition and the Cheeger-Gromoll
splitting Theorem above. Indeed, each irreducible factor needs to
have non-negative Ricci curvature. If there is a line not constant
on a factor $M_{i}$ then there is actually a line only tangent to
the $M_{i}$-factor. But then the connection on $M_{i}$ is reducible
which is impossible. 
\end{proof}
Note that if $M$ is not simply connected and the fundamental group
is torsion-free then each element in the fundamental group is represented
by a closed geodesic. But this geodesic lifts to a line in the universal
cover which has non-negative Ricci curvature as well. 
\begin{cor}
A geodesically complete Riemannian manifold $(M,F)$ with torsion-free
fundamental group and non-negative Ricci curvature splits into a product
as above where $M_{0}$ can be of the form $\mathbb{R}^{k}\times\mathbb{T}^{l}$.
Furthermore, every non-flat factor is simply connected.
\end{cor}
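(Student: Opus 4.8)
The plan is to reduce everything to the simply connected case via the universal cover and then track how the fundamental group acts on the resulting de Rham factors. First I would pass to the universal Riemannian cover $p\colon\widetilde M\to M$. Since $p$ is a local isometry, $\widetilde M$ is again geodesically complete with $\operatorname{Ric}\ge0$, and it is simply connected, so Theorem~\ref{thm:CheegerGromoll} applies and yields an isometric splitting $\widetilde M\cong\mathbb{R}^{k}\times M_{1}\times\cdots\times M_{n}$ in which the Euclidean factor is maximal and each $M_{i}$ ($i\ge1$) is a simply connected irreducible Riemannian manifold of non-negative Ricci curvature. Write $N=M_{1}\times\cdots\times M_{n}$ for the non-flat part. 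The same theorem tells us that every line of $\widetilde M$ is tangent to the Euclidean factor and has constant $M_{i}$-coordinate, i.e. lies inside a single slice $\mathbb{R}^{k}\times\{q\}$.

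Next I would analyse the deck group $\Gamma=\pi_{1}(M)$, which acts on $\widetilde M$ by isometries, freely and properly discontinuously, with $M=\widetilde M/\Gamma$. Since the maximal Euclidean de Rham factor is intrinsic, every isometry of $\widetilde M$ preserves the decomposition $\mathbb{R}^{k}\times N$, so each $\gamma\in\Gamma$ has the form $\gamma=(\alpha_{\gamma},\beta_{\gamma})$ with $\alpha_{\gamma}\in\operatorname{Isom}(\mathbb{R}^{k})$ and $\beta_{\gamma}\in\operatorname{Isom}(N)$ (the $\beta_\gamma$ being allowed to permute isometric factors among the $M_i$). By the observation preceding the corollary, torsion-freeness of $\Gamma$ makes every nontrivial $\gamma$ freely homotopic to a closed geodesic of $M$, which lifts to a line $\ell_{\gamma}\subset\widetilde M$ along which $\gamma$ translates by a positive amount. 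By the previous paragraph $\ell_{\gamma}$ lies in one slice $\mathbb{R}^{k}\times\{q_{\gamma}\}$ and is tangent to $\mathbb{R}^{k}$; the invariance $\gamma(\ell_{\gamma})=\ell_{\gamma}$ then forces $\beta_{\gamma}(q_{\gamma})=q_{\gamma}$. Writing the displacement function on the product as $d((u,v),\gamma(u,v))^{2}=d_{\mathbb{R}^{k}}(u,\alpha_{\gamma}u)^{2}+d_{N}(v,\beta_{\gamma}v)^{2}$ one also reads off that $q_{\gamma}$ minimises the displacement of $\beta_{\gamma}$ and that $\alpha_{\gamma}$ itself translates along the $\mathbb{R}^{k}$-direction of $\ell_{\gamma}$.

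The crucial step — and the one I expect to be the main obstacle — is to upgrade this to $\beta_{\gamma}=\operatorname{id}$ for every $\gamma$, equivalently that the homomorphism $\Gamma\to\operatorname{Isom}(N)$ is trivial and hence that the non-flat factors descend to the quotient untouched. Here one genuinely needs more than the splitting theorem in isolation: the point is that a nontrivial $\beta_{\gamma}$ fixing $q_{\gamma}$ would, via its fixed locus and its linear isotropy action on $T_{q_{\gamma}}N$, be incompatible with the irreducibility of the $M_{i}$ together with the absence of lines in $N$ — this is essentially the content of the Cheeger--Gromoll structure theorem that the non-Euclidean de Rham factors pass to the quotient. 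One should therefore carry this out by combining the fact that all $\beta_\gamma$ lie in the isotropy group of a common point with the properness of the $\Gamma$-action, rather than by a bare holonomy argument.

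Granting $\beta_{\gamma}=\operatorname{id}$, the group $\Gamma$ acts only on the Euclidean factor, so $M\cong(\mathbb{R}^{k}/\Gamma)\times M_{1}\times\cdots\times M_{n}$. Since $\Gamma$ is torsion-free and acts freely and properly discontinuously by Euclidean isometries, $M_{0}:=\mathbb{R}^{k}/\Gamma$ is a geodesically complete flat manifold with $\pi_{1}(M_{0})\cong\Gamma$; in the special case that $\Gamma$ acts by pure translations it is of the form $\mathbb{R}^{k-l}\times\mathbb{T}^{l}$. Each non-flat factor $M_{i}$ is left unchanged, hence remains simply connected with $\operatorname{Ric}\ge0$, which is the asserted product structure.
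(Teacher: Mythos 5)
Your strategy coincides with the one the paper implicitly uses: the corollary is stated without a separate proof, and the only justification offered is the remark immediately preceding it (nontrivial classes of a torsion-free fundamental group are represented by closed geodesics, these lift to lines in the universal cover, and by Theorem \ref{thm:CheegerGromoll} lines are tangent to the Euclidean factor). Your first two paragraphs carry this out carefully and correctly, up to the secondary point that for noncompact $M$ the existence of a closed geodesic in each free homotopy class, and the fact that its lift is a globally minimizing line rather than merely a complete $\gamma$-invariant geodesic, both need an argument (the paper glosses over this too).

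The genuine gap is exactly the step you flag as ``the main obstacle'': passing from ``$\beta_{\gamma}$ fixes a point $q_{\gamma}$ and $\gamma$ translates along a line in $\mathbb{R}^{k}\times\{q_{\gamma}\}$'' to ``$\beta_{\gamma}=\operatorname{id}$''. The heuristic you offer --- that a nontrivial isometry of $N$ with a fixed point would be incompatible with irreducibility of the $M_{i}$ and the absence of lines in $N$ --- is not an argument: an irreducible, line-free factor can have many nontrivial isometries fixing points (any rotation of the round $S^{2}$). Worse, the step cannot be closed in the stated generality. Take $\widetilde{M}=\mathbb{R}\times S^{2}$ with the product metric and let $\Gamma\cong\mathbb{Z}$ be generated by $(t,x)\mapsto(t+1,\rho x)$ with $\rho$ a rotation of infinite order. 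The quotient is geodesically complete, has $\operatorname{Ric}\ge0$ and torsion-free fundamental group, and every deck transformation translates along a line $\mathbb{R}\times\{\mathrm{pole}\}$ tangent to the flat factor --- all your conclusions hold --- yet since conjugation inside $\operatorname{Isom}(\mathbb{R})\times\operatorname{Isom}(S^{2})$ cannot trivialize the second component $\rho^{n}\ne\operatorname{id}$, this quotient is not isometric to $S^{1}\times S^{2}$ (nor, being compact, to $\mathbb{R}\times S^{2}$). So the isometric splitting asserted in the corollary fails for this example, and no completion of your third paragraph is possible without either strengthening the hypotheses (e.g.\ assuming the image of $\Gamma$ in $\operatorname{Isom}(N)$ is trivial) or weakening the conclusion to a statement about a fibration or about the universal cover. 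You should treat this as a defect of the statement rather than of your argument, but as written your proof does not (and cannot) establish the corollary.
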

Now the Berwald splitting follows immediately.
\begin{thm}
\label{thm:Berwald-structure}Let $(M,F)$ be a simply connected geodesically
complete Berwald space of non-negative Ricci curvature. Then $M$
is a product $M=M_{0}\times\cdots\times M_{n}$ where $M_{0}=\mathbb{R}^{k}$
is flat and each $(M_{i},F_{i})$ is simply connected and either an
irreducible Riemannian manifold of non-negative Ricci curvature or
a higher rank symmetric Berwald space of compact type. 
\end{thm}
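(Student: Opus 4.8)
The plan is to reduce everything to the Riemannian case through the Berwald Metrization Theorem and then transport the conclusions back using affine invariance of the Ricci curvature. First I would fix the uniquely defined Riemannian metric $g$ whose Levi-Civita connection is the induced connection $\nabla$ of $(M,F)$; then $(M,F)$ and $(M,g)$ are affinely equivalent, and by Theorem \ref{thm:Ricci} the hypothesis $\operatorname{Ric}_F\ge0$ passes to $\operatorname{Ric}_g\ge0$. Thus $(M,g)$ is a simply connected, geodesically complete Riemannian manifold of non-negative Ricci curvature, to which Theorem \ref{thm:CheegerGromoll} applies: we obtain an isometry $(M,g)\cong(M_0\times\cdots\times M_n,\sum g_i)$ with $M_0$ a flat Euclidean space $\mathbb{R}^k$ and each $(M_i,g_i)$, $i\ge1$, a simply connected irreducible Riemannian manifold of non-negative Ricci curvature, together with the corresponding splitting $\nabla=\sum_i\nabla_i$ into the Levi-Civita connections of the $g_i$.

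Next I would observe that this splitting of $\nabla$ is exactly the one underlying the Berwald de Rham decomposition, so Theorem \ref{thm:Berwald-deRham} lets me push the Finsler norm forward along the same product isometry: $(M_0,F_0)$ becomes a Minkowski space on $\mathbb{R}^k$ and, for $i\ge1$, $(M_i,F_i)$ becomes an irreducible Berwald space whose induced connection is $\nabla_i$, affinely equivalent to $(M_i,g_i)$. Geodesic completeness of each factor follows from the characterization in Theorem \ref{thm:Berwald-deRham} that a curve of $(M,F)$ is a geodesic iff each of its projections is, and simple connectedness of the factors is part of Theorem \ref{thm:CheegerGromoll}.

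It then remains to identify each irreducible factor $(M_i,F_i)$ for $i\ge1$. Since the curvature tensor of $\nabla$ annihilates mixed vectors, the Ricci curvature of a vector tangent to $M_i$ is computed entirely within $M_i$, so $(M_i,g_i)$ has non-negative Ricci curvature and hence, by Theorem \ref{thm:Ricci} once more, so does the affinely equivalent $(M_i,F_i)$. Theorem \ref{thm:affine-rigid} now forces each $(M_i,F_i)$ to be either an affinely rigid Riemannian manifold — necessarily an irreducible Riemannian manifold of non-negative Ricci curvature — or a higher rank symmetric Berwald space, in which case Lemma \ref{lem:HigherRankRicci} upgrades non-negative Ricci curvature to strictly positive Ricci curvature and compactness, i.e. compact type. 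Reassembling these factors with $M_0=\mathbb{R}^k$ yields the claimed product.

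The step I expect to demand the most care is checking that the three decompositions in play — the irreducible splitting of the affine connection $\nabla$, the de Rham / Cheeger-Gromoll splitting of $g$, and the Berwald de Rham decomposition of $F$ — are literally one and the same product decomposition, so that non-negativity of each $\operatorname{Ric}_{g_i}$ may legitimately be handed to the corresponding $(M_i,F_i)$; everything else (completeness and simple connectedness of the factors, and the fact that passing to a factor preserves the Ricci bound) is routine bookkeeping.
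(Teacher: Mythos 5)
Your proposal is correct and follows essentially the same route as the paper: the paper derives Theorem \ref{thm:Berwald-structure} "immediately" from Theorem \ref{thm:CheegerGromoll}, the Berwald de Rham decomposition (Theorem \ref{thm:Berwald-deRham}), affine invariance of the Ricci curvature (Theorem \ref{thm:Ricci}), the affine rigidity classification (Theorem \ref{thm:affine-rigid}) and Lemma \ref{lem:HigherRankRicci}, which is exactly the chain of reductions you spell out. The point you flag — that the irreducible splitting of $\nabla$, the Cheeger--Gromoll/de Rham splitting of $g$, and the Berwald de Rham decomposition are the same product decomposition — is indeed the only thing to check, and it holds because all three are governed by the single connection $\nabla$ shared by $F$ and its Szab\'o metrization $g$.
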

Again, if none of the factors is a higher rank symmetric space then
one obtains a more explicit form of the Finsler metric. Furthermore,
in the torsion-free case one can actually split of all non-flat factors.

Since every symmetric space is Einstein one might ask whether a similar
statement holds for higher rank symmetric Berwald spaces. More general,
what is the general structure of Berwald-Einstein spaces. 
\begin{defn}
[Berwald-Einstein] A Berwald space $(M,F)$ is called Berwald-Einstein
space if for every $v_{x}\in TM$ 
\[
\operatorname{Ric}(v_{x})=\lambda(x)F(v_{x})^{2},
\]
where $\lambda$ is a (fixed) function on $M$. 

We say that $(M,F)$ is Ricci-flat, if $\operatorname{Ric}=0$. Note
that any Ricci-flat Berwald space is Berwald-Einstein. Furthermore,
it is known that if $(M,F)$ is Riemannian then $\lambda$ needs to
be constant.\end{defn}
\begin{lem}
[Berwald-Einstein Rigidity {\cite{Deng2015}}] A connected Berwald-Einstein
space is either Ricci-flat or a Riemannian manifold. In each case
$\lambda$ is constant.\end{lem}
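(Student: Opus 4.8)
The plan is to combine the Berwald metrization theorem with the affine invariance of the Ricci curvature (Theorem~\ref{thm:Ricci}) and then propagate a pointwise statement around the manifold by parallel transport. Let $\nabla$ be the connection induced by $(M,F)$ and let $g$ be the Riemannian metric produced by the metrization theorem, so that $\nabla$ is its Levi-Civita connection and $(M,g)$ is affinely equivalent to $(M,F)$. By Theorem~\ref{thm:Ricci} the Finsler Ricci curvature of $(M,F)$ coincides with that of $(M,g)$ as a function on $TM$, and for a Riemannian metric this is nothing but the quadratic form $v\mapsto\operatorname{Ric}_g(v,v)$ attached to the symmetric bilinear Ricci tensor of $g$. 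Hence the Berwald--Einstein hypothesis becomes the identity
\[
\operatorname{Ric}_g|_x = \lambda(x)\,F_x^2
\]
of functions on $T_xM$, valid at every $x\in M$; in particular $\lambda(x)F_x^2$ must be the restriction of a quadratic form.

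Now I run a dichotomy according to whether $\lambda$ vanishes identically. Suppose first that $\lambda(x_0)\neq 0$ for some point $x_0$. Then $F_{x_0}^2 = \lambda(x_0)^{-1}\operatorname{Ric}_g|_{x_0}$ is the restriction of a quadratic form, and since $F_{x_0}$ is a genuine Minkowski norm this form must be positive definite; thus $F_{x_0}$ is Euclidean, i.e. induced by an inner product. The key observation is that \emph{being Euclidean is preserved under $\nabla$-parallel transport}: for any path from $x_0$ to $y$, parallel transport is a linear isomorphism $T_{x_0}M\to T_yM$ carrying the $F_{x_0}$-unit sphere onto the $F_y$-unit sphere, and a linear image of an ellipsoid centred at the origin is again such an ellipsoid, so $F_y$ is Euclidean too. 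Since $M$ is connected every point is reached from $x_0$ by a path, so $(M,F)$ is Riemannian; in that case the classical fact recalled in the definition of Berwald--Einstein spaces gives that $\lambda$ is constant.

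In the remaining case $\lambda\equiv 0$ one has $\operatorname{Ric}_F\equiv 0$, i.e. $(M,F)$ is Ricci-flat, with $\lambda$ constant for trivial reasons; this exhausts the possibilities and proves the lemma. I expect the only nonroutine ingredient to be the propagation step of the second paragraph — upgrading ``$F_{x_0}$ is Euclidean'' to ``$F$ is Riemannian on all of $M$.'' It is exactly here that the Berwald hypothesis is used: the linearity and norm-preservation of parallel transport force an ellipsoidal indicatrix at one point to spread to ellipsoidal indicatrices everywhere, and it may be worth recording separately that on a connected Berwald space being Riemannian is an all-or-nothing property. Everything else is quoted: the metrization theorem, Theorem~\ref{thm:Ricci}, and the Schur-type statement for Riemannian Einstein metrics.
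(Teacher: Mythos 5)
Your proof is correct and follows essentially the same route as the paper: observe that $\operatorname{Ric}$ is a quadratic form in $v$, deduce that $F$ is Euclidean wherever $\lambda\neq 0$, and propagate this to all of the connected manifold via the linear, norm-preserving parallel transport, then invoke the Riemannian Schur lemma. The paper's version is terser but rests on exactly the same dichotomy and propagation step.
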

\begin{proof}
Let $U=\{x\in M\,|\,\lambda(x)\ne0\}$. Note that $\operatorname{Ric}$
is a quadratic form. Suppose $U\ne\varnothing$ then $F\big|_{U}$
is a quadratic form as well. In particular, it is induced by an inner
product. But as $F$ is preserved by parallel transport also $F\big|_{M\backslash U}$
must be induced by an inner product. In particular, $(M,F)$ is Riemannian. \end{proof}
\begin{thm}
[Berwald-Einstein Structure Theorem] A geodesically complete Berwald-Einstein
space $(M,F)$ is either an Einstein (Riemannian) manifold or it is
Ricci-flat and each non-flat factor $M_{i}$ is an irreducible Ricci-flat
Riemannian manifold and $F$ is given as in Theorem \ref{thm:Berwald-deRham}.\end{thm}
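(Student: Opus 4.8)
The plan is to reduce everything to the two dichotomies already in hand: the Berwald--Einstein Rigidity lemma and the Berwald de Rham decomposition (Theorem~\ref{thm:Berwald-deRham}). First I would apply the Berwald--Einstein Rigidity lemma. It yields two cases. If $(M,F)$ is Riemannian, then it is an Einstein manifold and, by the same lemma, $\lambda$ is constant, so we are in the first alternative and there is nothing more to do. Hence from now on I assume $\operatorname{Ric}\equiv 0$, and the task is to understand the Ricci-flat case.

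In the Ricci-flat case I would apply Theorem~\ref{thm:Berwald-deRham} to write $M=M_0\times\cdots\times M_n$, where $M_0=\mathbb{R}^k$ carries a Minkowski norm and each $(M_i,F_i)$ is a simply connected, geodesically complete, irreducible Berwald space with induced connection $\nabla_i$, and $\nabla=\sum_i\nabla_i$. The crucial observation is that the Ricci curvature is additive over this product. Indeed, by Theorem~\ref{thm:Ricci} the Finsler Ricci curvature is a non-metric contraction of the curvature tensor of $\nabla$ and depends only on $\nabla$; since $\nabla$ splits, the curvature tensor $R$ is block-diagonal, i.e.\ its mixed components vanish (exactly as exploited in the reducibility lemma for flag curvature). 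Therefore, for a vector $v$ tangent to the $M_i$-factor one has $\operatorname{Ric}_M(v)=\operatorname{Ric}_{M_i}(v)$, so each factor $(M_i,F_i)$ is itself Ricci-flat.

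Next I would rule out higher rank symmetric factors. By Lemma~\ref{lem:HigherRankRicci}, a higher rank symmetric Berwald space of non-negative Ricci curvature has strictly positive Ricci curvature; this contradicts the Ricci-flatness of each factor established above. Hence none of the $M_i$ is a higher rank symmetric Berwald space, so by the affine rigidity classification (Theorem~\ref{thm:affine-rigid}) every non-flat factor is an affinely rigid Riemannian manifold, which by the previous step is moreover irreducible and Ricci-flat. Finally, since no factor is a higher rank symmetric space, the second half of Theorem~\ref{thm:Berwald-deRham} applies verbatim and gives $\hat F(v_0,\ldots,v_n)=G(v_0,F_1(v_1),\ldots,F_n(v_n))$ with $G$ a Minkowski norm symmetric in the last $n$ factors, completing the proof.

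I expect the only point that is not pure bookkeeping to be the additivity of the Ricci curvature over the de Rham product, since the rest merely assembles the cited theorems. That additivity, however, is essentially immediate once one uses the affine invariance of Theorem~\ref{thm:Ricci} to compute $\operatorname{Ric}$ from the split connection alone, so I do not anticipate a genuine obstacle.
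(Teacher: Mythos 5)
Your proposal is correct and follows essentially the same route as the paper: dispose of the non-Ricci-flat case via the Berwald--Einstein rigidity lemma, then in the Ricci-flat case exclude higher rank symmetric factors (your appeal to Lemma~\ref{lem:holonRk1}'s companion, Lemma~\ref{lem:HigherRankRicci}, is just a repackaging of the paper's observation that such a factor carries an affinely equivalent non-flat Einstein metric) and invoke the Berwald de Rham decomposition. The only difference is that you spell out the block-diagonality of the curvature tensor to get Ricci-flatness of each factor, a step the paper leaves implicit.
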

\begin{proof}
If $(M,F)$ is not Ricci-flat then it is Riemannian. In the Ricci-flat
case, note that a higher rank symmetric Berwald space has non-zero
Ricci curvature as there is an affinely equivalent non-flat Einstein
metric on such a space. Therefore, the full structure theorem applies.
\end{proof}
Because non-negative flag curvature implies non-negative Ricci curvature
we can apply the soul theorem to each of the factors and obtain a
weaker version of this theorem for Berwald spaces.
\begin{prop}
[Berwald soul theorem] Assume $(M,F)$ is a geodesically complete
Berwald space of non-negative flag curvature. Then there is a compact
totally convex, totally geodesic submanifold $S$, called soul of
$M$, such that $M$ is diffeomorphic to the normal bundle of $S$. \end{prop}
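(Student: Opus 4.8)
The plan is to reduce everything to the Riemannian soul theorem by way of affine equivalence, along the lines sketched after Theorem~\ref{thm:Berwald-deRham}. First I would apply the Berwald metrization theorem to produce a Riemannian metric $g$ on $M$ whose Levi-Civita connection is the induced connection $\nabla$ of $(M,F)$, so that $(M,F)$ and $(M,g)$ are affinely equivalent; as $(M,F)$ is geodesically complete and affine equivalence preserves auto-parallel curves, $(M,g)$ is geodesically, hence metrically, complete. The whole proof then reduces to the single claim that $(M,g)$ has non-negative sectional curvature. Granting this, the soul theorem \cite{Perelman1994} produces a compact, totally convex, totally geodesic submanifold $S\subset M$ with $M$ diffeomorphic to the total space of the normal bundle $\nu(S)=TM|_{S}/TS$. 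I would then observe that being totally geodesic and being totally convex (in the Cheeger--Gromoll sense: closed under all, not merely minimizing, geodesics) are properties of the family of auto-parallel curves alone, hence are shared by $(M,F)$, while the diffeomorphism $M\cong\nu(S)$ is a statement about smooth manifolds, independent of the metric. Thus $S$ is a soul for $(M,F)$ and the proposition follows.

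To establish the claim I would pass to the universal cover $\pi\colon\widetilde{M}\to M$ and set $\widetilde{F}=\pi^{*}F$, $\widetilde{g}=\pi^{*}g$; since sectional and flag curvature are local, it suffices to prove $\mathcal{K}(\widetilde{g})\ge 0$. The de Rham Decomposition Theorem splits the simply connected, geodesically complete manifold $(\widetilde{M},\widetilde{g})$ orthogonally as $M_{0}\times\cdots\times M_{n}$ with $\widetilde{g}=\bigoplus_{i}g_{i}$, where $M_{0}$ is flat and each $(M_{i},g_{i})$, $i\ge1$, is irreducible with Levi-Civita connection the corresponding irreducible component $\nabla_{i}$ of $\nabla$. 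Because $\widetilde{F}$ is affinely equivalent to $\widetilde{g}$, Theorem~\ref{thm:Berwald-deRham} shows $\widetilde{F}$ is compatible with the very same product, each $(M_{i},F_{i})$ being an irreducible Berwald space with induced connection $\nabla_{i}$, and by Theorem~\ref{thm:affine-rigid} each such factor is either an affinely rigid Riemannian manifold or a higher rank symmetric Berwald space. Since the sectional curvatures of a Riemannian product are those of the factors on non-mixed planes and zero on mixed planes, it is enough to check that $(M_{i},g_{i})$ has non-negative sectional curvature for every $i$.

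For $i=0$ this is immediate. If $(M_{i},F_{i})$ is Riemannian, with $F_{i}$ induced by a metric $h_{i}$, then $h_{i}$ and $g_{i}$ are both $\nabla_{i}$-parallel, and on an irreducible manifold the space of parallel symmetric $2$-tensors is one-dimensional, so $g_{i}=c_{i}h_{i}$; hence $\mathcal{K}(g_{i})=\mathcal{K}(h_{i})\ge 0$, the last inequality because $(M_{i},h_{i})=(M_{i},F_{i})$ is a totally geodesic submanifold of the non-negatively flag-curved $(\widetilde{M},\widetilde{F})$. If instead $(M_{i},F_{i})$ is a higher rank symmetric Berwald space, then $(M_{i},\nabla_{i})$ is an affine symmetric space; its geodesic symmetries $s_{p}$ are affine maps with $(\mathrm{d}s_{p})_{p}=-\mathrm{id}$, so $s_{p}^{*}g_{i}$ is $\nabla_{i}$-parallel and agrees with $g_{i}$ at $p$, forcing $s_{p}^{*}g_{i}=g_{i}$; thus $(M_{i},g_{i})$ is a Riemannian symmetric space. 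Affine invariance of the Ricci curvature (Theorem~\ref{thm:Ricci}) together with non-negativity of the flag curvature gives $\operatorname{Ric}(g_{i})\ge 0$, so Lemma~\ref{lem:HigherRankRicci} makes $(M_{i},g_{i})$ of compact type and therefore non-negatively curved. This completes the claim.

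The step I expect to be the genuine obstacle is the claim $\mathcal{K}(g)\ge 0$: a priori $g$ is only required to be compatible with $\nabla$, and sectional curvature is not determined by the connection, so one must really use that the de Rham factors pin $g$ down --- up to a positive scalar on the Riemannian factors, and as an honest symmetric metric on the symmetric ones --- and then feed in non-negativity of the flag curvature (through inheritance by totally geodesic submanifolds) and compact type. The secondary, more routine points to handle with care are that a parallel symmetric $2$-tensor on an irreducible manifold is a multiple of the metric, and that the Cheeger--Gromoll notions appearing in the soul theorem involve possibly non-minimizing geodesics, so that the possible asymmetry of the Finsler distance does not obstruct transporting the conclusion back to $(M,F)$.
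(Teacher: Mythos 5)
Your proposal is correct and follows essentially the same route as the paper: reduce to the Riemannian soul theorem via an affinely equivalent metric, using the de Rham splitting into affinely rigid Riemannian factors (which inherit non-negative curvature from the flag curvature bound) and higher rank symmetric factors (which are of compact type by the affine invariance of Ricci curvature and Lemma \ref{lem:HigherRankRicci}), then transporting the soul back through affine equivalence. You are in fact somewhat more careful than the paper on the secondary points you flag (passing to the universal cover, the one-dimensionality of parallel symmetric $2$-tensors on irreducible factors, and the transfer of total convexity), but the underlying argument is the same.
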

\begin{rem*}
We only show the existence of the soul. It is unclear whether the
Sharafutdinov map \cite{Sharafutdinov1978} is also distance non-increasing
in this setting. The contractive behavior of that map is proved via
a gradient flows of convex functions. In the Finsler setting, even
on flat Minkowski spaces, the gradient flow of convex function is
not necessarily contractive (see \cite[Theorem 3.2]{OS2012}).\end{rem*}
\begin{proof}
The proof follows by combining the splitting theorem above and the
soul theorem applied to each irreducible Riemannian factor of the
splitting. 

Namely, non-negative flag curvature implies that $M$ has non-negative
Ricci curvature. By the previous theorem $M=M_{0}\times\cdots\times M_{n}$
where $(M_{i},F_{i})$ are irreducible Riemannian or higher rank symmetric
spaces. Because $(M,F)$ has non-negative flag curvature, so does
every Riemannian factor $(M_{i},F_{i})=(M_{i},g_{i})$. For every
higher rank symmetric factor $(M_{i},F_{i})$, note that every affinely
equivalent Einstein metric $g_{i}$ on $M_{i}$ has non-negative sectional
curvature.

Take the Cartesian product $(M,\sum g_{i})$ of the Riemannian metrics
(with $g_{0}$ being any inner product on $M_{0}$). Then there is
a compact totally convex, totally geodesic Riemannian submanifold
$S$ of $(M,\sum g_{i})$. Because $(M,F)$ and $(M,\sum g_{i})$
are affinely equivalent, $S$ is also a compact totally convex, totally
geodesic submanifold of $(M,F)$. Furthermore, the soul theorem shows
that $M$ is diffeomorphic to the normal bundle of $S$. If one uses
the forward exponential map on $(M,F)$ instead of the exponential
map of $(M,g)$ then this diffeomorphism is on $C^{2}$ and $C^{\infty}$
iff $(M,F)$ is actually a Riemannian manifold and $F$ induced by
$g$.
\end{proof}
As an application we can show a conjecture by Lakzian \cite{Lakzian2014}
on non-negatively curved reversible Berwald spaces.
\begin{cor}
A geodesically complete Berwald space $(M,F)$ of non-negative flag
curvature with large volume growth is diffeomorphic to $\mathbb{R}^{n}$.\end{cor}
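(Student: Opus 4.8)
I would prove this by transporting the statement to the affinely equivalent Riemannian manifold and invoking a classical fact from Riemannian geometry. By the Berwald metrization theorem together with Theorem~\ref{thm:Berwald-structure}, $(M,F)$ is affinely equivalent to a Riemannian manifold $(M,g)$, concretely the metric $g=\sum_i g_i$ appearing in the proof of the Berwald soul theorem, and since $(M,F)$ has non-negative flag curvature, $(M,g)$ has non-negative sectional curvature. Because an affine equivalence is in particular a diffeomorphism, it suffices to show that $(M,g)$ is diffeomorphic to $\mathbb{R}^n$.

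The point at which the Berwald hypothesis enters is that large volume growth passes from $(M,F)$ to $(M,g)$. Both $F$ and $\sqrt{g}$ are invariant under the parallel transport of the common connection $\nabla$, and since $\nabla$ is the Levi-Civita connection of $g$ this parallel transport is a $g$-linear isometry between tangent spaces. Carrying the $F$- and $g$-unit balls of one fixed tangent space onto those of any other by such a, hence volume-preserving, map shows that the Busemann-Hausdorff volume density of $F$ equals a fixed positive multiple of the Riemannian density of $g$, i.e. $\operatorname{vol}_F=c\operatorname{vol}_g$ for some $c>0$ (the same works for the Holmes-Thompson volume). Restricting the same comparison to one tangent sphere yields uniform constants $0<c_1\le c_2$ with $c_1\sqrt{g_x}\le F_x\le c_2\sqrt{g_x}$, so that $d_F$ and $d_g$ are bi-Lipschitz equivalent and forward $F$-balls are sandwiched between $g$-balls of comparable radii. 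Combining the two observations, $\liminf_{r\to\infty}\operatorname{vol}_g(B^g_r(p))/r^n>0$.

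It then remains to prove the purely Riemannian statement that a complete, non-compact Riemannian manifold of non-negative sectional curvature with large volume growth is diffeomorphic to $\mathbb{R}^n$; non-compactness is automatic, since a compact manifold has bounded volume. I would argue from the soul theorem \cite{Perelman1994}: $M$ is diffeomorphic to the normal bundle of a compact totally geodesic soul $S$, and if $\dim S=k\ge1$, then a Bishop-Gromov type comparison for the tubes $\{d_g(\cdot,S)\le r\}$ around the compact submanifold $S$ in non-negative curvature gives $\operatorname{vol}_g(B^g_r(p))\le Cr^{\,n-k}$ for $r$ large, contradicting large volume growth; hence $S$ is a point and $M\cong\mathbb{R}^n$. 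Equivalently, one can feed the same tube estimate directly into the Berwald soul theorem, which already exhibits $(M,F)$ as the normal bundle of a compact soul. Pulling back through the affine equivalence then gives $(M,F)\cong\mathbb{R}^n$.

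The reduction to the Riemannian setting and the volume and distance comparisons are routine, and are clean precisely because the space is Berwald. The one genuinely geometric ingredient, and the step I expect to be the main obstacle, is ruling out a soul of positive dimension under the large-volume-growth hypothesis: the tube-volume estimate above, equivalently the statement that a non-trivial soul forces sub-Euclidean volume growth in non-negatively curved Riemannian geometry.
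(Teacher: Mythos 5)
Your proposal is correct, but it replaces the paper's key step by a different (and more self-contained) argument. The paper also reduces to the soul $S$ produced by the Berwald soul theorem and also transfers large volume growth to the affinely equivalent Riemannian metric exactly as you do (via $\operatorname{vol}_F=c\operatorname{vol}_g$ and the uniform comparability of the parallel norms $F$ and $\sqrt{g}$); but to rule out $\dim S\ge1$ it argues that a positive-dimensional soul is a compact Riemannian manifold, hence contains a closed geodesic by Lyusternik--Fet, and then invokes Lakzian's theorem that a non-negatively curved Berwald space of large volume growth admits no closed geodesic. You instead prove the purely Riemannian statement directly: if $\dim S=k\ge1$, the Heintze--Karcher/Jacobi-field comparison for tubes around a compact totally geodesic submanifold in non-negative sectional curvature (using surjectivity of the normal exponential map onto $M$) gives $\operatorname{vol}_g\bigl(B_r^g(p)\bigr)\le C\,r^{n-k}=o(r^n)$, contradicting Euclidean volume growth. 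Your route buys independence from Lakzian's closed-geodesic result (which is natural for the paper to cite, since the corollary answers Lakzian's conjecture) and in fact establishes the stronger classical fact that non-negative sectional curvature plus large volume growth forces the soul to be a point; the cost is importing the tube-volume comparison, which is standard but is genuine extra comparison geometry relative to the one-line appeal to the existence of a closed geodesic in a compact manifold. Both arguments are sound, and both hinge on the same preparatory observations (affine equivalence preserves the soul, and large volume growth passes to $(M,g)$).
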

\begin{proof}
Using the previous preposition we see that $(M,F)$ is diffeomorphic
to the normal bundle of a compact totally convex, totally geodesics
submanifold $S$. If $S$ is not a point then it contains a closed
geodesic, and then also $(M,F)$. Indeed, $(S,F_{S})$ is affinely
equivalent to a geodesically complete, compact Riemannian manifold
$(S,g_{S})$. Because $(S,g_{S})$ is Riemannian it contains a closed
geodesics (see \cite{Klingenberg1995}). 

However, Lakzian showed \cite{Lakzian2014} that a non-negatively
curved Berwald space with large volume growth cannot contain any closed
geodesics. Therefore, $S$ is a point and $M$ diffeomorphic to $\mathbb{R}^{n}$. 

Note that the assumption on reversibility is not needed. Indeed, if
$(M,F,\mathbf{m})$ has large volume growth (see \cite[3.2]{Lakzian2014})
then so has $(M,g,\mathbf{m})$ where $(M,g)$ is affinely equivalent
to $(M,F)$ and has non-negative sectional curvature.\end{proof}
\begin{rem*}
Lakzian actually showed the non-existence of closed geodesics for
Berwald spaces of non-negative radial flag curvature with large volume
growth. If non-negative radial flag curvature of $(M,F)$ implies
non-negative sectional curvature of an affinely equivalent Riemannian
manifold $(M,g)$ then the main result of \cite{Lakzian2014} can
be reduced to the Riemannian setting as well. It is likely that this
follows if one shows that every higher rank symmetric Berwald space
has the same curvature bound as its Riemannian equivalent.
\end{rem*}

\section{Weighted Ricci curvature}

Similar to Riemannian manifolds there is a Finsler version of \emph{weighted
Ricci curvature}, see \cite{Ohta2009}. However, instead of the general
version we will use a Berwald version resembling the weighted Ricci
curvature for Riemannian manifolds. With this we can apply again the
structure theorem for Riemannian manifold with non-negative weighted
Ricci curvature.

Let us first define the weighted Ricci curvature: Denote by $\operatorname{vol}_{F}$
the \emph{Busemann-Hausdorff measure}. One can show that this measure
is a multiple of the volume form $\operatorname{vol}_{g}$ of an affinely
equivalent Riemannian structure $g$ on $M$ (follows from the proof
of \cite[Prop. 7.3.1]{Shen2001}). 

Now we can equip any Berwald space with a smooth measure $\mathbf{m}$
which is absolutely continuous w.r.t. $\operatorname{vol}_{F}$. Then
there is a function $\Psi:M\to\mathbb{R}$ such that 
\[
\mathbf{m}=e^{-\Psi}\operatorname{vol}_{F}.
\]

\begin{defn}
[Weighted Ricci curvature] Denote by $n$ the dimension of $M$ and
let $N'\in(n,\infty)$ and $v=\dot{\eta}(0)$ be a tangent vector.
We define the following quantities:
\begin{itemize}
\item $\operatorname{Ric}_{n}(v)=\begin{cases}
\operatorname{Ric}(v)+\left(\Psi\circ\eta\right)''(0) & \mbox{if \ensuremath{\left(\Psi\circ\eta\right)}'(0)=0}\\
-\infty & \mbox{if \ensuremath{\left(\Psi\circ\eta\right)}'(0)\ensuremath{\ne}0}
\end{cases}$
\item $\operatorname{Ric}_{N'}(v)=\operatorname{Ric}(v)+\left(\Psi\circ\eta\right)''(0)+\frac{\left(\left(\Psi\circ\eta\right)'(0)\right)^{2}}{N'-n}$
\item $\operatorname{Ric}_{\infty}(v)=\operatorname{Ric}(v)+\left(\Psi\circ\eta\right)''(0).$
\end{itemize}
Note that we have $\operatorname{Ric}_{N}(cv)=c^{2}\operatorname{Ric}_{N'}$
for $N\in[n,\infty]$ and $c>0$. 
\end{defn}
Now we say that $(M,F,\mathbf{m})$ the $N$-dimensional Ricci curvature
bounded from below if 
\[
\operatorname{Ric}_{N}(v)\ge KF(v)^{2}.
\]

\begin{rem*}
If $F$ is induced by a Riemannian metric then it is exactly the definition
of weighted (Riemannian) Ricci curvature. Furthermore, one can show
that it is equivalent to Ohta's original Finsler definition, see \cite{Ohta2009,Ohta2013a}.
Indeed, first note that there is a function $\Phi:M\to\mathbb{R}$
such that 
\[
\operatorname{vol}=e^{-\Phi}\operatorname{vol}_{\dot{\eta}}
\]
where $\operatorname{vol}_{\dot{\eta}}$ denote the Riemannian volume
measure w.r.t. $g(\dot{\eta})$. As $\operatorname{vol}_{F}$ is the
Busemann-Hausdorff measure $(\Phi\circ\eta)'(0)\equiv0$ \cite[Prop. 7.3.1]{Shen2001}.
Now if 
\[
\mathbf{m}=e^{-\tilde{\Psi}(\eta)}\operatorname{vol}_{\dot{\eta}}
\]
for some function $\tilde{\Psi}:M\to\mathbb{R}$ then $\Psi=\tilde{\Psi}-\Phi$.
But then $(\tilde{\Psi}\circ\eta)'(0)=(\Psi\circ\eta)'(0)$ and $(\tilde{\Psi}\circ\eta)''(0)=(\Psi\circ\eta)''(0)$.
In particular, the definitions of weighted Ricci curvatures agree.\end{rem*}
\begin{prop}
Weighted Ricci curvature is an affine invariant.\end{prop}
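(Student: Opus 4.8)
The plan is to reduce the statement to two facts that are already available. First, the unweighted Ricci curvature of a Berwald space is an affine invariant (Theorem~\ref{thm:Ricci}). Second, the auto-parallel curves depend only on the connection $\nabla$, so if $(M,F)$ and $(M,F')$ are affinely equivalent Berwald spaces then the geodesic $\eta$ with $\dot\eta(0)=v$ used in the definition of $\operatorname{Ric}_N$ is literally the same curve whether computed with $F$ or with $F'$: it is the $\nabla$-auto-parallel curve through $v$, and it has constant speed for both norms since both are parallel. Fix a measure $\mathbf m$ and write $\mathbf m=e^{-\Psi}\operatorname{vol}_F=e^{-\Psi'}\operatorname{vol}_{F'}$. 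Inspecting the definition, the weighted Ricci curvature $\operatorname{Ric}_N$ computed from $(F,\Psi)$ and from $(F',\Psi')$ involves $\Psi$ and $\Psi'$ only through the numbers $(\Psi\circ\eta)'(0)$, $(\Psi\circ\eta)''(0)$ and $(\Psi'\circ\eta)'(0)$, $(\Psi'\circ\eta)''(0)$ along the common geodesic $\eta$; so it suffices to prove that $\Psi-\Psi'=\log(\operatorname{vol}_F/\operatorname{vol}_{F'})$ is constant on $M$.

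This follows from the claim that the Busemann--Hausdorff measure of a Berwald space is parallel for its induced connection, $D_V\operatorname{vol}_F\equiv0$. To see this, note that the parallel transport $P_\gamma$ along any curve is a linear isomorphism of tangent spaces carrying the $F$-unit ball onto the $F$-unit ball; in particular every element of the holonomy group $\mathcal H_x$ fixes a bounded open convex body in $T_xM$, hence has determinant $\pm1$, so $\mathcal H_x$ is unimodular. Thus $\operatorname{vol}_F|_x$ may be transported to a well-defined volume density along $M$, and since $P_\gamma$ maps unit ball to unit ball this transported density still assigns volume $\omega_n$ to every $F$-unit ball, i.e.\ it coincides with $\operatorname{vol}_F$; hence $\operatorname{vol}_F$ is parallel. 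Equivalently, using the affinely equivalent Riemannian metric $g$ with $\nabla g=0$: $P_\gamma$ is then a $g$-isometry carrying $\{F<1\}$ to $\{F<1\}$, so $\operatorname{vol}_{g_x}\{F_x<1\}$ is independent of $x$ and $\operatorname{vol}_F=c\,\operatorname{vol}_g$ for a constant $c>0$. This strengthens the first-order observation recorded in the remark above, that $(\Phi\circ\eta)'(0)\equiv0$, to the statement that $\operatorname{vol}_F$ is parallel along every geodesic.

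Applying this to both $F$ and $F'$, the ratio $h=\operatorname{vol}_F/\operatorname{vol}_{F'}$ satisfies $(Vh)\,\operatorname{vol}_{F'}=D_V(h\operatorname{vol}_{F'})=D_V\operatorname{vol}_F=0$ for all $V$ on the connected manifold $M$, so $h$ is a positive constant; hence $\Psi'=\Psi+\log h$, and therefore $(\Psi'\circ\eta)'(0)=(\Psi\circ\eta)'(0)$ and $(\Psi'\circ\eta)''(0)=(\Psi\circ\eta)''(0)$ for every geodesic $\eta$. Combined with $\operatorname{Ric}_F(v)=\operatorname{Ric}_{F'}(v)$ from Theorem~\ref{thm:Ricci}, this yields $\operatorname{Ric}_N(v)$ is the same for $(M,F,\mathbf m)$ and $(M,F',\mathbf m)$ for every $N\in[n,\infty]$ (the case distinction in $\operatorname{Ric}_n$ is triggered identically since the first jets agree), which is the assertion. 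The only real content --- and hence the main obstacle --- is the parallelism of $\operatorname{vol}_F$; everything else is bookkeeping, the one point requiring care being the ``unit ball to unit ball'' normalization argument (or, in the Riemannian-companion version, the standard fact that two Riemannian metrics with the same Levi-Civita connection have proportional volume forms).
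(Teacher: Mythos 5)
Your proof is correct and takes essentially the same route as the paper: reduce to the affine invariance of the unweighted Ricci curvature together with the fact that $\operatorname{vol}_{F}$ and $\operatorname{vol}_{F'}$ are constant multiples of one another, so that $\Psi-\Psi'$ is constant and all the weight terms along the common auto-parallel curve $\eta$ coincide. The only difference is that you prove the proportionality of the Busemann--Hausdorff measures from scratch (parallelism of the density, since parallel transport carries unit ball to unit ball), whereas the paper obtains it by citing Shen's Proposition 7.3.1.
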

\begin{proof}
Let $(M,F)$ and $(M,F')$ be affinely equivalent. Equip $M$ with
a measure $\mathbf{m}$. Then we have the decomposition 
\[
\mathbf{m}=e^{-\Phi_{F}}\operatorname{vol}_{F}=e^{-\Phi_{F'}}\operatorname{vol}_{F'}.
\]
As the Busemann-Hausdorff measures are multiples of each other we
must have $\Phi_{F}=\Phi_{F'}+c$ for some constant $c\in\mathbb{R}$.
But geodesics as curves agree so that we must have $\operatorname{Ric}_{N}^{F}=\operatorname{Ric}_{N}^{F'}$.
\end{proof}
Similar to the discussion above we get the following the structure
theorem for non-negative weighted Ricci curvature: By an isometrically
embedded line $\eta:\mathbb{R}\to M$ we implicitly equip $\mathbb{R}$
either with a symmetric metric or an asymmetric metric to allow for
lines inside the Minkowski factor.
\begin{lem}
[Weighted Berwald Splitting Theorem] Let $(M,F,\mathbf{m})$ be a
geodesically complete Berwald space with non-negative $N$-Ricci curvature.
If $M$ contains an isometrically embedded line then there is a measure
preserving isometry onto $(\mathbb{R}\times M',F',\lambda\times\mathbf{m}')$. \end{lem}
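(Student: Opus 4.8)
The plan is to deduce the Weighted Berwald Splitting Theorem from the corresponding Riemannian statement by passing to an affinely equivalent Riemannian metric, exactly mirroring the unweighted argument of Theorem~\ref{thm:Berwald-structure}. First I would invoke the Berwald Metrization Theorem to obtain a Riemannian metric $g$ on $M$ whose Levi-Civita connection is the induced connection $\nabla$ of $(M,F)$; then $(M,F)$ and $(M,g)$ are affinely equivalent, so auto-parallel curves and hence geodesics-as-curves agree. Next I would transport the weight: write $\mathbf{m}=e^{-\Phi_F}\operatorname{vol}_F$ and, since $\operatorname{vol}_F$ is a constant multiple of $\operatorname{vol}_g$ (from the proof of \cite[Prop. 7.3.1]{Shen2001}), rewrite $\mathbf{m}=e^{-\Phi_g}\operatorname{vol}_g$ with $\Phi_g=\Phi_F+c$. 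By the Proposition above, the $N$-Ricci curvature is an affine invariant, so $(M,g,\mathbf{m})$ is a geodesically complete weighted Riemannian manifold with $\operatorname{Ric}_N\ge 0$.

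The second step is to handle the embedded line. An isometrically embedded line $\eta:\mathbb{R}\to M$ for $(M,F)$ is an auto-parallel curve that is globally minimizing in both directions; since the metric $d_F$ restricted to $\eta$ may be asymmetric I would, following the convention stated just before the lemma, equip $\mathbb{R}$ with either the symmetric or the asymmetric length structure so that $\eta$ is still a line for $(M,g)$ — the key point is only that $\eta(\mathbb{R})$ is a complete auto-parallel curve realizing the distance, and this is a statement about curves, hence affine-invariant. Then the weighted Cheeger--Gromoll splitting theorem for Riemannian manifolds (the weighted analogue of the Cheeger--Gromoll Splitting Theorem recalled above, due to Lichnerowicz/Fang--Li--Zhang in the $\operatorname{Ric}_N\ge 0$, $N<\infty$ case and to Lichnerowicz/Wylie for $N=\infty$) gives a measure-preserving isometry $(M,g,\mathbf{m})\cong(\mathbb{R}\times M',\,(\cdot,\cdot)+g',\,\lambda\times\mathbf{m}')$ with $(M',g',\mathbf{m}')$ again of non-negative $N'$-Ricci for the induced $N'=N-1$, and in particular $\nabla$ acquires a flat one-dimensional factor tangent to the $\mathbb{R}$-direction.

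The final step is to upgrade this to a statement for the Berwald structure. Since the splitting of $g$ corresponds to a splitting $\nabla=\nabla_{\mathbb{R}}\oplus\nabla'$ of the induced connection into a reducible factor, Theorem~\ref{thm:Berwald-deRham} (its product-decomposition mechanism, applied to the pair $\mathbb{R}$ versus $M'$ rather than to a full de Rham decomposition) lets me push $F$ forward to a Finsler norm $F'$ on $\mathbb{R}\times M'$ for which the $\mathbb{R}$-factor is a Minkowski (here one-dimensional, hence symmetric-or-asymmetric) factor and $(M',F')$ is a Berwald space with induced connection $\nabla'$; affine equivalence of $(M,F)$ with $(M,g)$ and of the product Berwald structure with the product Riemannian metric then identifies the map as a smooth isometry. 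Finally the measure $\mathbf{m}$ transports under this isometry, and because its density splits as $e^{-\Phi_g}\operatorname{vol}_g=e^{-\Phi_g}\,(\operatorname{vol}_{\mathbb{R}}\otimes\operatorname{vol}_{g'})$ with $\Phi_g$ affine to a function independent of the $\mathbb{R}$-coordinate (this is precisely what the line-splitting of the weight delivers), we get $\mathbf{m}=\lambda\times\mathbf{m}'$, i.e.\ a measure-preserving isometry onto $(\mathbb{R}\times M',F',\lambda\times\mathbf{m}')$.

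**Main obstacle.** The delicate point is the bookkeeping around asymmetry: one must be careful that ``isometrically embedded line'' for the (possibly non-reversible) Finsler distance $d_F$ still yields a genuine line for the symmetric Riemannian distance $d_g$, and conversely that the Minkowski factor produced on $\mathbb{R}$ is allowed to be asymmetric; this is exactly why the statement prefaces itself with the convention about equipping $\mathbb{R}$ with a symmetric or asymmetric metric. Everything else is a routine transfer through affine equivalence, using that $\operatorname{Ric}_N$, geodesics-as-curves, and the Busemann--Hausdorff measure up to scale are all affine invariants.
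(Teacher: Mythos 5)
Your route is the one the paper intends (the paper gives no explicit proof of this lemma, only the remark ``similar to the discussion above''): metrize the connection via Szab\'o's theorem, use the affine invariance of $\operatorname{Ric}_N$ and of the Busemann--Hausdorff class to transfer the weighted curvature bound to $(M,g,\mathbf{m})$, apply the weighted Riemannian splitting theorem, and push the Finsler norm back through the reducible splitting $\nabla=\nabla_{\mathbb{R}}\oplus\nabla'$ as in Theorem \ref{thm:Berwald-deRham}. All of that is correct and matches the paper's implicit argument.

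The one step I would not accept as written is the transfer of the line. You justify it by saying that being ``a complete auto-parallel curve realizing the distance'' is ``a statement about curves, hence affine-invariant.'' Auto-parallelism is affine-invariant; global minimality is not. Affinely equivalent metrics have the same geodesics as unparametrized curves, but which of several geodesics joining two points is the shortest depends on the norm: on a flat torus with a non-Euclidean Minkowski norm $F$ and the Euclidean metric $g$, the $F$-minimizer and the $g$-minimizer between two points can lie in different homotopy classes. So ``$\eta$ is a line for $d_F$'' does not formally imply ``$\eta$ is a line for $d_g$,'' and the weighted Cheeger--Gromoll theorem needs a $g$-line. This can be repaired --- e.g.\ using that $F$ and $g$ are both parallel, so the two norms are comparable with constants that are uniform over $M$, whence a $d_F$-line is a $d_g$-quasi-line and a limiting argument (or the excess/Busemann-function version of the splitting theorem) produces a genuine $g$-line; alternatively one can quote Ohta's Finsler splitting directly for the existence of the flat factor. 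But some such argument must be supplied; ``affine invariance'' alone does not cover it. The rest of your proof, including the handling of asymmetry and the factorization of the weight, is sound.
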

\begin{thm}
Let $(M,F,\mathbf{m})$ be a geodesically complete Berwald space with
non-negative $N$-Ricci curvature. Then there is a measure-preserving
isometry onto $(M_{0}\times M',\hat{F},\lambda^{k}\mathbf{m}')$ where
$M_{0}=\mathbb{R}^{k}$ is flat, $\lambda^{k}$ is the Lebesgue measure
on $\mathbb{R}^{k}$ and $(M',\hat{F}_{M'},\mathbf{m}')$ has non-negative
$(N-k)$-dimensional Ricci curvature. In particular, all lines are
entirely tangent to the flat factor. Furthermore, none of the factors
of $M'$ is a higher rank symmetric space of non-compact type.\end{thm}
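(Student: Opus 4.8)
The plan is to pass to the affinely equivalent Riemannian partner, read the product structure off the ordinary de Rham decomposition, and then extract both refinements — Lebesgue measure on the flat factor and the absence of non-compact symmetric factors — from a single Bakry--Émery type concavity argument. By the Berwald metrization theorem $(M,F)$ is affinely equivalent to a Riemannian manifold $(M,g)$; applying the de Rham Decomposition Theorem to $(M,g)$ (first passing to the universal cover if $M$ is not simply connected, under which lines, parallel fields and the lifted measure all behave well) gives $M\cong\mathbb{R}^{k}\times N_{1}\times\cdots\times N_{m}$ with $\mathbb{R}^{k}$ the maximal Euclidean de Rham factor and each $N_{i}$ irreducible. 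Transporting $F$ through this isometry, as in Theorem~\ref{thm:Berwald-deRham}, realizes $(M,F)$ as a Berwald product $M_{0}\times M'$ with $M_{0}=\mathbb{R}^{k}$ flat and $M'=N_{1}\times\cdots\times N_{m}$, where each $(N_{i},F_{i})$ is, by Theorem~\ref{thm:affine-rigid}, either an affinely rigid Riemannian manifold or a higher rank symmetric Berwald space. Since the Ricci curvature (Theorem~\ref{thm:Ricci}) and the weighted Ricci curvature (the proposition above) are affine invariants, the bound $\operatorname{Ric}_{N}\geq0$ passes to this picture.

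First I would show that $\mathbf{m}$ splits. Fix $x'\in M'$ and look at the flat slice $\mathbb{R}^{k}\times\{x'\}$: it is totally geodesic with $\operatorname{Ric}\equiv0$ along it, so along a straight line $\eta$ in this slice the inequality $\operatorname{Ric}_{N}(\dot\eta)\geq0$ reduces, writing $n=\dim M$ and using the Bakry--Émery form of the definition above, to $(\Psi\circ\eta)''(0)\geq\frac{1}{N-n}\big((\Psi\circ\eta)'(0)\big)^{2}$, i.e. to the concavity of $t\mapsto e^{-(\Psi\circ\eta)(t)/(N-n)}$ (here $N-n>0$ is used). A positive concave function on $\mathbb{R}$ is constant, hence $\Psi$ is independent of the $\mathbb{R}^{k}$-coordinate, so $\mathbf{m}=\lambda^{k}\times\mathbf{m}'$ and the isometry of Theorem~\ref{thm:Berwald-deRham} is measure-preserving. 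A term-by-term comparison of the definitions — the flat factor contributes $k$ to the dimension and nothing to curvature or weight, and $N-\dim M=(N-k)-\dim M'$ — then gives $\operatorname{Ric}_{N-k}^{M'}(v)=\operatorname{Ric}_{N}^{M}(v)\geq0$ for every $v\in TM'$, so $(M',\hat F_{M'},\mathbf{m}')$ has non-negative $(N-k)$-Ricci curvature.

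For the last two assertions: suppose some $N_{i}$ were a higher rank symmetric Berwald space of non-compact type. Its affinely equivalent Riemannian symmetric metric is Einstein with negative Einstein constant, so by affine invariance of Ricci curvature $\operatorname{Ric}^{M'}(v)=\operatorname{Ric}^{N_{i}}(v)<0$ for all $0\neq v\in TN_{i}$. But $N_{i}$ contains a complete, totally geodesic, isometrically flat submanifold $\Sigma\cong\mathbb{R}^{r}$ with $r=\operatorname{rank}N_{i}\geq2$, whose geodesics are straight lines and are geodesics of $M'$; running the computation of the previous paragraph along such a line, now with the strictly negative term $\operatorname{Ric}^{M'}(\dot\eta)$ present, forces $e^{-(\Psi\circ\eta)(t)/(N-n)}$ to be positive and \emph{strictly} concave on all of $\mathbb{R}$ — impossible. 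Hence every $N_{i}$ is Riemannian or of compact type. Finally, if $\gamma$ is an isometrically embedded line of $M_{0}\times M'$, the Weighted Berwald Splitting Theorem produces an isometry $M_{0}\times M'\cong\mathbb{R}\times M''$ taking $\gamma$ to a fibre of the $\mathbb{R}$-factor, so $\dot\gamma$ extends to the parallel field $\partial_{t}$ of this splitting; by the de Rham Decomposition Theorem every parallel vector field on $M_{0}\times M'$ lies in the Euclidean factor $TM_{0}$, since an irreducible non-flat $N_{i}$ admits no parallel vector field (such a field would make its holonomy reducible). Thus $\gamma\subset M_{0}$; the same argument run inside $M'$, which has no Euclidean de Rham factor, shows in addition that $M'$ contains no line.

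The step I expect to be the main obstacle is the interface between the one-step Finsler splitting and the affine structure: one must know that the $\mathbb{R}$-factor produced by the Weighted Berwald Splitting Theorem is a de Rham factor of the induced connection, so that $\dot\gamma$ really extends to a globally parallel field, and one must pin down the sign convention in the definition of $\operatorname{Ric}_{N}$ so that ``$\operatorname{Ric}_{N}\geq0$'' is exactly concavity of $e^{-\Psi/(N-n)}$ in the relevant weighted sense — it is precisely here that the finiteness of $N$ enters and makes the Lebesgue-measure conclusion possible. The remaining pieces (the de Rham bookkeeping, the dimension count in the weighted Ricci identity, and the elementary fact about positive concave functions on a line) are routine.
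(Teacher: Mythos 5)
The paper gives no written proof of this theorem: it is meant to follow by iterating the Weighted Berwald Splitting Lemma (the weighted Cheeger--Gromoll splitting, transported through the affinely equivalent Riemannian metric), peeling off one $\mathbb{R}$-factor per line exactly as in Theorem \ref{thm:CheegerGromoll}, with the Lebesgue factor of the measure and the absence of line-containing (hence non-compact symmetric) factors coming packaged with that lemma. Your route is genuinely different: you run the de Rham decomposition first and extract both refinements from an explicit Bakry--\'Emery ODE along complete geodesics. This buys several things the paper's sketch glosses over. You correctly use the standard minus sign on the term $\bigl((\Psi\circ\eta)'(0)\bigr)^{2}/(N'-n)$ (as printed, with a plus sign, the flat-direction inequality gives convexity of $e^{\Psi/(N-n)}$, which forces nothing); you isolate exactly where $N<\infty$ enters (for $N=\infty$ the Gaussian weight on $\mathbb{R}^{k}$ has $\operatorname{Ric}_{\infty}\ge0$ and the measure does not split off Lebesgue, so the theorem silently assumes $N$ finite); and your exclusion of non-compact symmetric factors via strict concavity along a complete geodesic with constant negative Ricci avoids having to verify that a geodesic line inside a symmetric factor is a metrically embedded line of the Finsler product, which is not immediate when the distance lacks the explicit product form.

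The one genuine gap is the non-simply-connected case. The theorem does not assume simple connectedness (the remark following it explicitly worries that the factors of $M'$ need not be simply connected), while the de Rham decomposition you start from lives only on the universal cover. Your parenthetical ``pass to the universal cover, under which lines behave well'' does not close this: a line in the cover need not project to a line in $M$, so the Euclidean de Rham factor upstairs can be strictly larger than the flat factor $\mathbb{R}^{k}$ asserted for $M$ (compare $\mathbb{R}\times S^{1}$ with its cover $\mathbb{R}^{2}$), and a quotient of a product by a deck group is not in general a product. To cover the general case you should invert your architecture and do what the paper intends: apply the Weighted Berwald Splitting Lemma directly to $M$ (Cheeger--Gromoll requires no simple connectedness), iterate until the complement $M'$ contains no line, and only then rule out non-compact symmetric pieces --- a step where your ODE argument is actually the better tool, since it runs along a single complete geodesic tangent to the relevant parallel subbundle of the holonomy decomposition and needs no global product structure on $M'$.
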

\begin{rem*}
In contrast to the general structure theorem it is not possible to
split of all flat factors as it is not clear that all but the flat
factors of $M'$ are simply connected.
\end{rem*}
Similar to the structure theorem the Finsler norm can be given more
explicitly if none of the factors of $M'$ is a higher rank symmetric
Berwald space of compact-type.

\section{Conclusion and Questions}

We have shown that excluding symmetric factors, the topology and many
geometric properties reduce to the study of metric products of irreducible
Riemannian manifolds. Furthermore, (weighted) Ricci curvature is an
affine invariant in the category of Berwald spaces. In particular,
non-negative/non-positive Ricci curvature is an affine invariant.
Since non-negative $N$-Ricci curvature is equivalent to Lott-Sturm-Villani's
curvature dimension condition $CD(0,N)$ (see \cite{Ohta2009} for
definition and proof for Finsler manifolds), one might ask whether
this holds more generally.
\begin{question}
Assume $(X_{i},d_{i},\mathbf{m}_{i})$ are metric spaces satisfying
the $CD(0,N_{i})$-condition. Is it true that the metric product $(X_{1}\times\cdots\times X_{n},d,\times_{i}\mathbf{m}_{i})$
with 
\[
d(\mathbf{x},\mathbf{y})=F(d(x_{1},y_{1}),\ldots,d(x_{n},y_{n}))
\]
satisfies the $CD(0,\sum N_{i})$-condition?
\end{question}
Easier to answer might be the question whether this holds for the
metric products of $RCD$-spaces. More general one could ask whether
$CD(K_{i},N_{i})$ implies $CD(K,N)$ for some $K$.

The concept of affine equivalence can be defined also for geodesic
metric spaces. Namely, two metrics on $X$ are affinely equivalent
if every geodesic of $(X,d_{1})$ is a geodesic of $(X,d_{2})$.
\begin{question}
Assume $(X,d)$ is an $RCD(0,N)$-space. Does every affinely equivalent
metric space $(X,d')$ satisfy $CD(0,N)$?
\end{question}
It is not difficult to see that this is true in the compact setting
if the definition of $CD(0,N)$ is relaxed to allow convexity of the
Renyi entropy to hold along ``some'' curve in the Wasserstein space
$\mathcal{P}_{2}(X,d)$, compare \cite[9.2]{AmbGigSav2008}.

\bibliographystyle{amsalpha}
\bibliography{bib}

\end{document}